\newtheorem{theorem}{Theorem}[section]
\newtheorem{lemma}[theorem]{Lemma}
\newtheorem{proposition}[theorem]{Proposition}
\theoremstyle{definition}
\newtheorem{definition}[theorem]{Definition}
\newtheorem{remark}[theorem]{Remark}
\newtheorem{conjecture}[theorem]{Conjecture}
\numberwithin{equation}{section}
\newcommand{\CC}{\mathbb C}
\newcommand{\HH}{\mathbb H}
\newcommand{\NN}{\mathbb N}
\newcommand{\PP}{\mathbb P}
\newcommand{\QQ}{\mathbb Q}
\newcommand{\RR}{\mathbb R}
\newcommand{\ZZ}{\mathbb Z}
\newcommand{\cD}{\mathcal D}
\newcommand{\cH}{\mathcal H}
\newcommand{\SL}{\mathop{\mathrm {SL}}\nolimits}
\newcommand{\SO}{\mathop{\mathrm {SO}}\nolimits}
\newcommand{\Orth}{\mathop{\null\mathrm {O}}\nolimits}
\newcommand{\rank}{\mathop{\mathrm {rank}}\nolimits}
\newcommand{\latt}[1]{{\langle{#1}\rangle}}
\newcommand{\Borch}{\operatorname{Borch}}
\def\div{\operatorname{div}}
\newcommand{\im}{\operatorname{Im}}
\newcommand{\mult}{\operatorname{mult}}
\newcommand{\II}{\operatorname{II}}
\newcommand{\pr}{\operatorname{pr}}
\newcommand{\Proj}{\operatorname{Proj}}
\newcommand{\Spm}{\operatorname{Spm}}
\begin{document}

\title[Free algebras of orthogonal modular forms]{The classification of free algebras of orthogonal modular forms}

\author{Haowu Wang}

\address{Max-Planck-Institut f\"{u}r Mathematik, Vivatsgasse 7, 53111 Bonn, Germany}

\email{haowu.wangmath@gmail.com}

\subjclass[2010]{11F55, 51F15, 32N15}

\date{\today}

\keywords{modular forms for orthogonal groups, symmetric domains of type IV, weighted projective spaces, reflection groups}

\begin{abstract}
We prove a necessary and sufficient condition for the graded algebra of automorphic forms on a symmetric domain of type IV being free. From the necessary condition, we derive a classification result.
Let $M$ be an even lattice of signature $(2,n)$ splitting two hyperbolic planes. Suppose $\Gamma$ is a subgroup of the integral orthogonal group of $M$ containing the discriminant kernel. It is proved that there are exactly 26 groups $\Gamma$ such that the space of modular forms for $\Gamma$ is a free algebra.  Using the sufficient condition, we recover some  well-known results. 
\end{abstract}

\maketitle

\section{Introduction}
Let $\Gamma$ be a discrete automorphism group of a complex symmetric domain $\cD$ with fundamental domain of finite volume acting on the affine cone over $\cD$. The space of automorphic forms on $\cD$ for $\Gamma$ is an infinite counterpart of the polynomial invariants of a finite linear group. The seminal Shephard-Todd-Chevalley theorem (\cite{ST54, Che55}) asserts that the algebra of invariants of a finite linear group acting on a complex vector space is free if and only if this group is generated by (complex) reflections.  Similarly, a topological argument in \cite{VP89} shows that if the algebra of automorphic forms is free then the group $\Gamma$ is generated by reflections. It is known that reflections exist only in two infinite families of symmetric domains: complex balls and symmetric domains of type IV in Cartan's classification. In this paper we focus on the latter case which corresponds to orthogonal modular forms, namely automorphic forms on symmetric domains of type IV for orthogonal groups of signature $(2,n)$. 

It is a difficult problem to determine the structure of the algebra of automorphic forms in general. From a geometric perspective, this is equivalent to find a projective model of the modular variety.  If the algebra of modular forms for a congruence group $\Gamma$ acting on $\cD$ is freely generated, then the Satake-Baily-Borel compactification of the modular variety $\cD/\Gamma$ is a weighted projective space (see \cite{BB66}).  In 1962, Igusa proved that the algebra of even-weight Siegel modular forms of genus 2 is freely generated by forms of weights 4, 6, 10, 12 in \cite{Igu62}. Siegel modular forms of genus 2 can be realized as modular forms for the orthogonal group $\Orth(2,3)$. This is the first example of free algebras of orthogonal modular forms in dimension larger than 2. After Igusa, more free algebras of $\Orth(2,n)$-modular forms were constructed in \cite{AI05, DK03, DK06, Kri05, FH00, FS07, HU14, Vin10, Vin18}. Recently, the author proved joint with B. Williams that the spaces of orthogonal modular forms are free algebras for 25 groups in a universal way in \cite{WW20}.

It is another interesting problem to derive some classification of free algebras of orthogonal modular forms. There are only two known results in this direction. The first is attributed to Shvartsman and Vinberg, who proved in  \cite{SV17} that the algebra of modular forms for orthogonal groups of signature $(2,n)$ with $n>10$ is never free. They concluded the result from a criterion of smoothness at infinity for the quotient space of the affine cone over $\cD$ by $\Gamma$. We will give a simple proof of their result in a particular case (see Theorem \ref{th:nonfree}). The second is due to Stuken, who gave a classification of free algebras of Hilbert modular forms which can be realized as orthogonal modular forms of signature (2,2) in \cite{Stu19}. In this paper we present a classification of free algebras of orthogonal modular forms under a mild condition. The idea starts with the Rankin-Cohen-Ibukiyama differential operators introduced in \cite{AI05}, which can be regarded as the Jacobian determinant of $n+1$ modular forms on $\cD$ for $\Gamma$ (see Theorem \ref{th:Jacobian}). Following Vinberg's insights \cite{Vin13}, we are able to prove the following result which gives a necessary and sufficient condition for the graded algebra of modular forms for $\Gamma$ to be free.

\begin{theorem}[Theorem \ref{th:freeJacobian} and Theorem \ref{th:converseJacobian}]
\label{mth1}
If the graded algebra $M_*(\Gamma)$ of modular forms for $\Gamma$ is free, then the Jacobian determinant of the $n+1$ free generators defines a cusp form for $\Gamma$ with the determinant character which vanishes exactly on all mirrors of reflections in $\Gamma$ with multiplicity one. Conversely, if there exists a modular form with a character for $\Gamma$ which vanishes exactly on all mirrors of reflections in $\Gamma$ with multiplicity one and equals a Jacobian determinant of $n+1$ modular forms for $\Gamma$, then $M_*(\Gamma)$ is a free algebra.
\end{theorem}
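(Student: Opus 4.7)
The plan is to prove the two implications separately, both pivoting on the Jacobian determinant construction (Theorem \ref{th:Jacobian}) together with a reflection/transformation argument and an Euler identity on the affine cone.

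\textbf{Necessary direction.} Assume $M_*(\Gamma) = \CC[F_1, \ldots, F_{n+1}]$ is a free polynomial algebra with generators of weights $w_1, \ldots, w_{n+1}$. First I would invoke Theorem \ref{th:Jacobian}: the Jacobian determinant $J = J(F_1, \ldots, F_{n+1})$ is a modular form for $\Gamma$ carrying the determinant character, of weight $n + \sum_i w_i$, and is nonzero by algebraic independence of the free generators. For each reflection $r \in \Gamma$ with mirror $H_r$, the transformation law gives $r^* J = \det(r)\, J = -J$, forcing $J$ to vanish identically along $H_r$; thus $J$ has vanishing order at least one on every mirror. The cusp-form property would follow from the observation that a modular form bearing a nontrivial determinant character must vanish to positive order at every 0-dimensional cusp, since the unipotent stabilizer in $\Gamma$ intersects the reflections nontrivially via that character. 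To upgrade multiplicity $\geq 1$ to equality, I would compare Hilbert series: freeness gives $H(M_*(\Gamma); t) = \prod_i (1 - t^{w_i})^{-1}$, whose leading asymptotic ties $\sum w_i$ to the total branching contribution of mirror divisors, and combined with the weight $n + \sum_i w_i$ of $J$ this pins the total vanishing order on all mirrors and forces each individual order to equal one.

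\textbf{Sufficient direction.} Now assume $F_1, \ldots, F_{n+1}$ are modular forms for $\Gamma$ whose Jacobian $J$ equals (up to scalar) a modular form $\Phi$ with character vanishing exactly on reflection mirrors with multiplicity one. Since $\Phi \not\equiv 0$ the $F_i$ are algebraically independent, so $R = \CC[F_1, \ldots, F_{n+1}] \subset M_*(\Gamma)$ is a polynomial ring of Krull dimension $n + 1 = \dim M_*(\Gamma)$, and $M_*(\Gamma)$ is a finite $R$-module. The plan is to show $R = M_*(\Gamma)$ by induction on weight via "partial derivatives". For $G \in M_*(\Gamma)$ of weight $k$ and each index $i$, define $J_i(G)$ to be the Jacobian of the tuple obtained by replacing $F_i$ with $G$; by Theorem \ref{th:Jacobian} this is a modular form with determinant character. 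The same reflection identity $r^* J_i(G) = -J_i(G)$ shows that $J_i(G)$ vanishes on every mirror, and the multiplicity-one hypothesis on $\Phi$ makes the quotient $J_i(G)/\Phi$ holomorphic, hence a modular form for $\Gamma$ of strictly smaller weight $k - w_i$. Away from the vanishing locus of $\Phi$ the $F_i$ are local holomorphic coordinates, so the weighted-homogeneity of $G$ as a function on the affine cone yields the Euler identity
\[
k\, G = \sum_{i=1}^{n+1} w_i\, F_i \cdot \frac{J_i(G)}{\Phi},
\]
which by continuity extends to the whole cone. By induction on $k$, each $J_i(G)/\Phi$ lies in $R$, hence so does $G$; algebraic independence then upgrades generation to free generation.

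\textbf{Main obstacle.} The technical heart of both directions is the multiplicity-one property of $\Phi$. In the necessary direction, ruling out higher-order vanishing demands a precise Hilbert-series or volume-theoretic identity relating $\sum w_i$ to the count of mirrors, which is subtler than the reflection argument that produced multiplicity $\geq 1$. In the converse direction, the analogous subtlety is justifying holomorphy of $J_i(G)/\Phi$, which requires that $M_*(\Gamma)$ be normal at generic points of mirror divisors so that the divisorial inequality $\div J_i(G) \geq \div \Phi$ implies divisibility in the graded ring itself. Both issues should reduce to a codimension-one analysis of the affine cone over $\cD/\Gamma$.
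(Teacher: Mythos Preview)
Your sufficient direction is essentially the paper's proof. The Euler-type identity you write as $kG = \sum_i w_i F_i \cdot J_i(G)/\Phi$ is exactly the paper's relation $\sum_{t=1}^{n+2} (-1)^t k_t f_t J_t = 0$ rewritten after dividing by $J$ and isolating the last term; the induction on weight is the same minimal-weight contradiction argument. The holomorphy of $J_i(G)/\Phi$ follows directly from the reflection law plus the multiplicity-one hypothesis on $\Phi$, so your worry about normality is not needed here: these are holomorphic functions on the cone, and the divisorial inequality already gives holomorphy of the quotient.

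The necessary direction, however, has a real gap. Your reflection argument gives vanishing of order $\geq 1$ on every mirror, but your proposed Hilbert-series comparison does not establish either (a) that $J$ vanishes \emph{only} on mirrors, or (b) that the order is exactly one. The leading asymptotic of $\prod_i (1-t^{w_i})^{-1}$ encodes $\prod_i w_i$, not $\sum_i w_i$, and there is no a priori formula tying the weight $n+\sum_i w_i$ to the degree of the mirror locus without already knowing the divisor of $J$. Even if such a weight identity held, it would only bound the total multiplicity over mirror orbits; it would not exclude an extra non-mirror component in $\div J$ compensated by some mirror having multiplicity zero. (Your cusp-form argument via ``unipotent stabilizers meeting reflections'' is also not right---unipotents have determinant $1$---but this is moot since cuspidality is already contained in Theorem~\ref{th:Jacobian}(3).)

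The paper's route is more direct and avoids any counting. Freeness makes $(\mathcal L_n/\Gamma)^*\cong\CC^{n+1}$, and the quotient map $\pi:\mathcal L_n\to\CC^{n+1}$ is exactly $\mathcal Z\mapsto(\tilde f_1(\mathcal Z),\ldots,\tilde f_{n+1}(\mathcal Z))$. At a point $v$ with trivial stabilizer $\Gamma_v$, $\pi$ is a local biholomorphism, so its Jacobian $J$ is nonzero at $v$; by Gottschling's theorem this already forces the zero locus of $J$ to be contained in the union of mirrors. At a generic point of a mirror the stabilizer is $\{1,\sigma_r\}$, and in suitable local coordinates $\pi$ looks like $(x_0,x_1,\ldots,x_n)\mapsto(x_0^2,x_1,\ldots,x_n)$, whose Jacobian vanishes to order exactly one along $\{x_0=0\}$. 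This geometric local-model argument is what replaces your Hilbert-series step, and it simultaneously handles ``only on mirrors'' and ``multiplicity one''.
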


The sufficient condition provides a powerful method for constructing free algebras of orthogonal modular forms.  We discuss two famous examples of signature $(2,3)$. For full Siegel modular forms of genus 2, the Jacobian determinant of four generators of weights 4, 6, 10, 12 is indeed the unique Siegel modular form of odd weight 35. For Siegel modular forms of genus 2 for the subgroup $\Gamma_2(2,4)$, the Jacobian determinant of four second order theta constants is exactly Igusa's cusp form $\chi_5$ which is the product of ten theta constants and vanishes precisely on the diagonal of the Siegel upper half-plane with multiplicity one. Obviously, the Igusa theorem in \cite{Igu62} and the Runge theorem in \cite{Run93} can be recovered quickly using our result.

The modular form with special divisor in Theorem \ref{mth1} is called reflective in the literature. Reflective modular forms have many applications in algebra and geometry, and the number of such modular forms is finite (see \cite{Ma18}). In \cite{Wan18, Wan19} the author developed an approach to classify reflective modular forms based on the theory of Jacobi forms of lattice index (see \cite{CG13}). Applying this approach to the present case, we find that if $M_*(\Gamma)$ is a free algebra then $\Gamma$ corresponds to a root system of the same rank as $L$ and the Coxeter numbers of the irreducible components of the root system satisfy certain conditions. We then deduce the following theorem from these conditions.

\begin{theorem}[Theorem \ref{th:classification}]
\label{mth2}
Let $M=2U\oplus L(-1)$ be an even lattice of signature $(2,n)$ splitting two hyperbolic planes. Let $\Orth^+(M)$ be the orthogonal group preserving $M$ and the domain $\cD$. Let $\widetilde{\Orth}^+(M)$ be the discriminant kernel which is a subgroup  of $\Orth^+(M)$ acting trivially on the discriminant group of $M$.  Suppose $\Gamma<\Orth^+(M)$ is a  subgroup containing $\widetilde{\Orth}^+(M)$. If $M_*(\Gamma)$ is a free algebra, then $\Gamma$ must be one of the $26$ groups defined as $\Gamma_R=\latt{\widetilde{\Orth}^+(2U\oplus L_R(-1)), W(R)}$, where $R$ is a root system of type $A_r (1\leq r \leq 7)$, $B_r (2\leq r \leq 4)$, $D_r (4\leq r \leq 8)$, $C_r (3\leq r \leq 8)$, $G_2$, $F_4$, $E_6$, $E_7$, or $E_8$, $W(R)$ is the Weyl group of $R$, and $L_R$ is the root lattice generated by $R$ (we rescale its bilinear form by $2$ such that it is even when $L_R$ is an odd lattice).
\end{theorem}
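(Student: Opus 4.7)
The plan is to combine the necessary direction of Theorem \ref{mth1} with the technique for classifying reflective modular forms developed in \cite{Wan18, Wan19}, and then reduce the statement to a finite combinatorial problem about root systems.

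Suppose $M_*(\Gamma)$ is freely generated by $f_0, \ldots, f_n$ of weights $k_0 \leq \cdots \leq k_n$. By Theorem \ref{mth1}, the Jacobian determinant $J = J(f_0, \ldots, f_n)$ is a cusp form with determinant character whose divisor is the full mirror divisor of $\Gamma$, with multiplicity one along each mirror. The reflections in $\Gamma$ therefore correspond bijectively to a set of primitive reflective vectors in $M$, and projecting these vectors to $L$ along the decomposition $M = 2U \oplus L(-1)$ produces a finite root system $R \subset L$. Every reflection in $\Gamma$ is then attached to some $r \in R$, so $W(R)$ is precisely the reflection subgroup of $\Gamma$, and combined with the assumption $\widetilde{\Orth}^+(M) \subseteq \Gamma$ this yields $\Gamma = \latt{\widetilde{\Orth}^+(M), W(R)}$.

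The crucial step is the Fourier-Jacobi expansion of $J$ at a one-dimensional cusp coming from an isotropic plane $U \subset 2U$. The first non-zero Fourier-Jacobi coefficient is a holomorphic Jacobi form $\phi$ of lattice index $L$ in the sense of \cite{CG13}, of the same weight as $J$. The multiplicity-one reflective divisor of $J$ pushes $\phi$ into theta-block form $\phi = \eta^{c}\prod_{\alpha \in R^{+}} \vartheta(\tau,(\alpha,z))$, with one theta factor for each positive root of $R$. Matching the Jacobi index of this product with $L$ forces $L = L_R$, and in particular $\rank(R) = n$. Matching the weight yields an arithmetic identity linking the weights $k_i$, the integer $c$, and the Coxeter numbers $h(R_i)$ of the irreducible components $R_i$ of $R$. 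Together with the Shvartsman-Vinberg bound $n \leq 10$ from \cite{SV17} and the holomorphicity of $\phi$, this restricts the admissible root systems to a small finite list.

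The remaining step is the enumeration: solving the resulting Diophantine system on Coxeter numbers, and checking lattice-gluing compatibility with $M = 2U \oplus L_R(-1)$ case by case, isolates exactly the $26$ root systems in the statement. The main obstacle is this case analysis, especially ruling out borderline candidates whose Coxeter numbers formally satisfy the weight identity but whose root lattice $L_R$ cannot be realized as the negative part of a signature $(2,n)$ lattice of the required form, or whose Weyl group fails to be normalized by $\widetilde{\Orth}^{+}(M)$ so that the compound group $\latt{\widetilde{\Orth}^{+}(M), W(R)}$ ceases to act on the domain $\cD$ in the way required by Theorem \ref{mth1}.
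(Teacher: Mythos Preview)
Your overall strategy---extract a reflective cusp form $J$ from Theorem~\ref{mth1}, read off a root system from its divisor, and reduce to a finite enumeration via Coxeter-number constraints---matches the paper's approach. But several steps are either misstated or missing the key ingredient.

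First, the sentence ``projecting these vectors to $L$ \ldots\ produces a finite root system $R\subset L$'' is too quick. Reflective vectors in $M$ do not simply project to roots in $L$; the paper instead defines $R(L^\vee)=\{x\in L^\vee: J \text{ vanishes on }\cD_{(0,0,x,1,0)}\}$ and then checks directly (using Eichler transvections and the reflection action) that the rescaled set $R(L)$ is closed under its own reflections. Your claim that ``matching the Jacobi index forces $L=L_R$'' is also too strong: the argument only shows $R(L)$ spans $L\otimes\RR$, so $L$ is an even \emph{overlattice} of the root lattice, and several borderline cases (e.g.\ $L=N_8$ over $8A_1$, or $L=E_8$ over $2D_4$) must be eliminated separately.

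Second, and more importantly, you never identify the decisive numerical constraint. The paper's key input is that $J$ has Fourier--Jacobi expansion $q^{n-1}\xi^{n-1}(\cdots)$ (Theorem~\ref{th:Jacobian}(5)), which forces the Weyl-vector component $C$ of the Borcherds product to satisfy $C\geq \rank(L)+1$. Since $C$ equals a ``modified Coxeter number'' determined by $R(L)$ (computed case by case in Lemma~\ref{lem:main}), this inequality is what cuts the list down to a handful of candidates with $d=1$ (or $d=2,3$ for $E_7,E_8$). Your ``holomorphicity of $\phi$'' and the Shvartsman--Vinberg bound $n\leq 10$ are not sharp enough to do this; indeed the paper obtains $\rank(L)\leq 8$ from a separate complete-$2$-divisor argument (Theorem~\ref{th:nonfree}).

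Finally, your description of the ``main obstacle'' is off. The borderline cases are not ruled out by lattice-realizability or normalization issues; they are killed by showing either that the required reflective Borcherds product does not exist (e.g.\ $A_8$, $nA_1$ for $n\geq 5$, via \cite{Wan19}), or that the weight of $J$ is incompatible with the minimum possible sum of generator weights (e.g.\ $N_8$, $E_8(2)$, $E_7(2)$ via explicit dimension bounds on low-weight modular forms), or that the divisor of a putative factor $J_i$ cannot span $L\otimes\RR$ (the $2F_4(2)$ case).
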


\begin{remark}
The algebra of modular forms for every $\Gamma_R$ above is free and the constructions of generators are known.  It was proved in \cite{HU14} that the algebra of modular forms on $\Orth^+(2U\oplus E_8(-1))$ is freely generated by forms of weights 4, 10, 12, 16, 18, 22, 24, 28, 30, 36, 42, and in \cite{DKW19} that the generators can be chosen as additive lifts of Jacobi Eisenstein series.  The other 25 cases were proved in a universal and elementary way in \cite{WW20}. A general rule characterizing the weights of generators was also given.
\end{remark}

The paper is organized as follows. In the next section we introduce some necessary materials about orthogonal modular forms and Jacobi forms. In \S \ref{sec:condition} we prove the necessary condition in Theorem \ref{mth1}.  \S \ref{sec:classification} is devoted to the proof of Theorem \ref{mth2}. In \S \ref{sec:sufficient} we prove the sufficient condition and present many applications.

\section{Automorphic forms on symmetric domains of type IV}
In this section we give an overview of the theory of orthogonal modular forms. We recall some basic properties of orthogonal modular forms and introduce the theory of Jacobi forms of lattice index and the Rankin-Cohen-Ibukiyama type differential operators.

\subsection{Modular forms for orthogonal groups}
Let $\RR^{2,n}$ be a pseudo-Euclidean vector space of signature $(2,n)$ and let $\Orth_{2,n}$ be the group of its orthogonal transformations. In this paper we always assume that $n\geq 3$. We set $\CC^{2,n}=\RR^{2,n}\otimes \CC$ and consider the cone
$$
\widetilde{\mathcal{L}_n}=\{ \mathcal{Z} \in \CC^{2,n} :  (\mathcal{Z}, \mathcal{Z})=0, (\mathcal{Z},\bar{\mathcal{Z}}) > 0 \}.
$$
It has two complex conjugate connected components. We denote by $\mathcal{L}_n$ one of these components. Let $\Orth^+_{2,n}< \Orth_{2,n}$ be the subgroup of index 2 preserving the component $\mathcal{L}_n$. Let $\cD_n$ be the projectivization of $\mathcal{L}_n$, which is identified with the Hermitian symmetric domain of type IV, namely $\Orth^+_{2,n}/ (\SO_2\times \Orth_n)$. 

Let $\Gamma< \Orth^+_{2,n}$ be an arithmetic subgroup. By \cite[Proposition 5.4]{GHS13}, there exists an even lattice $M$ of signature $(2,n)$ such that $\Gamma$ is contained in $\Orth^+(M)$ which is the orthogonal group fixing $M$. In this paper we are more willing to change the lattice, so we assume that $\Gamma$ is a finite index subgroup of some $\Orth^+(M)$. 

\begin{definition}
Let $k$ be a non-negative integer. A modular form of weight $k$ and character $\chi: \Gamma\to \CC^*$ for $\Gamma$ is a holomorphic function $F: \mathcal{L}_n\to \CC$ satisfying
\begin{align*}
F(t\mathcal{Z})&=t^{-k}F(\mathcal{Z}), \quad \forall t \in \CC^*,\\
F(g\mathcal{Z})&=\chi(g)F(\mathcal{Z}), \quad \forall g\in \Gamma.
\end{align*}
\end{definition}

Geometrically, the modular form $F$ can be viewed as a $\Gamma$-invariant holomorphic section of the $k$-th power of the line bundle $\bar{\pi}$, where $\bar{\pi}$ is the line bundle obtained from the natural $\Orth^+_{2,n}$-invariant holomorphic $\CC^*$-bundle $\pi: \mathcal{L}_n \to \cD_n$ by filling in the zero section. 

By \cite{Bor95}, the modular form $F$ either has weight 0 in which case it is constant, or has weight at least $n/2-1$. The minimal possible positive weight $n/2-1$ is called the singular weight.

The group $\Gamma$ acts properly discontinuously on $\cD_n$, but in general there are elements of finite order in $\Gamma$ which have fixed points in $\cD_n$. This leads to singularities of $\cD_n/\Gamma$.
The quotient $\cD_n/\Gamma$ is a normal complex space and is not compact. In fact, it is a quasi-projective variety of dimension $n$ by \cite{BB66}.  
In order to compactify this quotient, we need to add some boundary components such that the resulting space is a projective variety. The Satake-Baily-Borel compactification provides such a way. In our case, the Satake-Baily-Borel compactification $(\cD_n/\Gamma)^*$ contains $\cD_n/\Gamma$ as a Zariski open subset and it is obtained by adding the following rational boundary components
$$
\coprod_{c} Q_c \bigsqcup \coprod_{P} X_P, 
$$
where $c$ and $P$ run through representatives of the finitely many $\Gamma$-orbits of isotropic lines and isotropic planes in $M\otimes \QQ$ respectively. Each $X_P$ is a modular curve, each $Q_c$ is a point, and $Q_c$ is contained in the closure of $X_Q$ if and only if the representatives may be chosen such that $c\subset P$. Usually, $X_P$ and $Q_c$ are also respectively called 1-dimensional and 0-dimensional boundary components (or cusps).  By Koecher's principle, a modular form is also holomorphic on the boundary, and it is called a \textit{cusp} form if it vanishes on every such boundary component. 

The space of modular forms of weight $k$ with trivial character is a finite-dimensional vector space. We denote this space by $M_k(\Gamma)$. The graded algebra
$$
M_*(\Gamma)=\bigoplus_{k=0}^\infty M_k(\Gamma)
$$
is known to be finitely generated.  The projective variety $\Proj (M_*(\Gamma))$ coincides with the above Satake-Baily-Borel compactification of $\cD_n/\Gamma$ (see \cite{BB66}).

\subsection{Fourier expansion of orthogonal modular forms}
We first fix some notations. For an even lattice $M$, we denote its dual lattice by $M^\vee$ and its discriminant group by $D(M)=M^\vee/M$. The discriminant kernel $\widetilde{\Orth}^+(M)$ is defined as the kernel of the reduction map $\Orth^+ (M) \to \Orth(D(M))$. The level of $M$ is the minimal positive integer $N$ such that $N(x,x)\in 2\ZZ$ for all $x\in M^\vee$. For $v\in M$, we denote the positive generator of the ideal $(v,M)=\{(v,x): x\in M\}$ by $\div(v)$. 
For any integer $a$, the lattice obtained by rescaling $M$ with $a$ is denoted by $M(a)$.

Let $M$ be an even lattice of signature $(2,n)$ with $n\geq 3$. By \cite[P.43]{Ser73}, $M$ has isotropic vectors. Moreover, $M$ contains an isotropic plane when $n\geq 5$. 
Let $c$ be a primitive isotropic vector of $M$, i.e. a zero-dimensional cusp. We introduce the Fourier expansion of orthogonal modular forms at the cusp $c$ following \cite{Bor95, CG13}. 

For any $\mathcal{Z}\in \mathcal{L}_n$ there exists a unique $\alpha\in \CC^*$ such that $(\alpha \mathcal{Z},c)=1$. It follows that
$$
\cD(M)_c=\left\{\mathcal{Z}\in \mathcal{L}_n: (\mathcal{Z},c)=1\right\}\cong \cD(M):=\cD_n.
$$
The lattice 
$
M_c=c^{\perp}/c
$
is an even lattice of signature $(1,n-1)$. We fix an element $b\in M^\vee$ such that $(c,b)=1$. Then one has
$
M_c\cong M_{c,b}=M\cap c^\perp\cap b^\perp.
$
This yields a decomposition
$$
M\otimes \QQ=M_{c,b}\otimes\QQ \oplus (\QQ b+\QQ c).
$$
Using the hyperbolic lattice $M_c\otimes\RR$ we can define a positive cone
$$
C(M_c)=\{ X \in M_c\otimes \RR : (X,X)>0\}.
$$
Let $C^+(M_c)$ be one of the two connected components of $C(M_c)$.  The following tube domain gives the complexification of $C^+(M_c)$
\begin{equation}
\mathcal{H}_c(M)=M_c\otimes \RR + iC^+(M_c).
\end{equation}
There is an isomorphism $\pr_c: \mathcal{H}_c(M)\to \cD(M)_c\cong \cD(M)$ defined as
\begin{equation}
\pr_c: \quad Z\mapsto Z\oplus \left[b-\frac{(Z,Z)+(b,b)}{2}c  \right].
\end{equation}
Using the coordinate  $Z\in\mathcal{H}_c(M)$ determined by $c$ and $b$, we identify an arbitrary orthogonal modular form $F$ of weight $k$ with a modular form $F_{c,b}$ (or simply $F_c$) on the tube domain $\mathcal{H}_c(M)$:
\begin{equation}
F_{c,b}(Z)=F(\pr_c(Z)).
\end{equation}
For every $g\in \Orth^+(M)$ and $Z\in \mathcal{H}_c(M)$, there exist  $J_{c,b}(g,Z)\in \CC^*$ and $g\latt{Z} \in \mathcal{H}_c(M)$ such that 
\begin{equation}
g\pr_c (Z)= J_{c,b}(g,Z) \pr_c (g\latt{Z}).
\end{equation}
The above relation defines an action of  $\Orth^+(M)$ on $\mathcal{H}_c(M)$. A modular form of weight $k$ and character $\chi$ can be also defined on the tube domain via 
\begin{align*}
F_{c,b}\lvert_k g &=\chi(g)F_{c,b},\\
(F_{c,b}\lvert_k g)(Z):&=J_{c,b}(g,Z)^{-k}F_{c,b}(g\latt{Z}).
\end{align*}

Let $F\in M_k(\widetilde{\SO}^+(M))$.  Since the \textit{Eichler transvection} 
\begin{equation}\label{def:Eichler}
t(c,a): v\mapsto v-(a,v)c+(c,v)a-\frac{1}{2}(a,a)(c,v)c
\end{equation}
belongs to $\widetilde{\SO}^+(M)$ for all $a\in M_{c,b}$ and $t(c,a)(\pr_c (Z))= \pr_c (Z+a)$,
we have $F_c(Z+a)=F_c(Z)$, which gives the Fourier expansion of $F$ at the cusp $c$:
\begin{equation}
F_c(Z)=\sum_{l\in M_{c,b}^\vee} f(l)\exp (2\pi i (l,Z)).
\end{equation}
The Koecher principle shows that the function $F_c$ is holomorphic at the cusp $c$, which implies that if
$f(l)\neq 0$ then $l$ belongs to the closure of $C^+(M_c)$. If a modular form has singular weight then all the Fourier coefficients associated to vectors of non-zero norm vanish.

\begin{remark}
Let $\Gamma$ be a finite index subgroup of $\Orth^+(M)$. By \cite[\S 9]{PR94}, $\Gamma$ is a congruence subgroup, namely there exists a positive integer $d$ such that $\widetilde{\Orth}^+(M(d))<\Gamma$.  Thus for any arithmetic subgroup $\Gamma$, there is an even lattice $M_1$ such that $\widetilde{\Orth}^+(M_1)<\Gamma<\Orth^+(M_1)$. Hence we have the Fourier expansion of the above form for any modular forms.
\end{remark}

\subsection{Fourier-Jacobi expansion: Jacobi forms of lattice index} 
If the lattice $M_{c,b}$ also contains an isotropic vector, then one has the Fourier-Jacobi expansion of modular forms. We explain this precisely. Assume that $M$ contains two hyperbolic planes, i.e. $M=U_1\oplus U_2 \oplus L(-1)$, where $L$ is an even positive definite lattice and $U_i=\ZZ e_i+\ZZ f_i$, $(e_i,e_i)=(f_i,f_i)=0$, $(e_i,f_i)=1$. We fix $(e_1,e_2,...,f_2,f_1)$ as a basis of $M$, where $...$ stands for a basis of $L$.  We choose $c=e_1$ and $b=f_1$. Then the tube domain $\cH_{e_1}(M)$ can be written as 
$$
\cH(L)=\{Z=(\tau,\mathfrak{z},\omega)\in \HH\times (L\otimes\CC)\times \HH: 
(\im Z,\im Z)>0\}, 
$$
where $(\im Z,\im Z)=2\im \tau \im \omega - (\im \mathfrak{z},\im \mathfrak{z})_L$. Thus $F_{e_1}=F_{e_1,f_1}$ has the following expansion
\begin{align*}
F_{e_1}(Z)&=\sum_{m=0}^\infty\sum_{\substack{n\in \NN\\ 2nm\geq (\ell,\ell)}}f(n,\ell,m)e^{2\pi i (n\tau+(\ell,\mathfrak{z})+m\omega)}\\
&=\sum_{m=0}^\infty\phi_m(\tau,\mathfrak{z})e^{2\pi i m\omega}.
\end{align*}

Let $\Gamma^J(L)$ be the subgroup of $\Orth^+ (M)$ preserving the above Fourier-Jacobi expansion. This group is called the Jacobi group and can be realized as the semi-direct product of $\SL_2(\ZZ)$ with the integral Heisenberg group of $L$. We define Jacobi forms as modular forms with respect to the Jacobi group. 

\begin{definition}
For $k\in\ZZ$, $t\in \NN$, a holomorphic 
function $\varphi : \HH \times (L \otimes \CC) \rightarrow \CC$ is 
called a {\it weakly holomorphic} Jacobi form of weight $k$ and index $t$ associated to $L$,
if it satisfies the following transformation laws
\begin{align*}
\varphi \left( \frac{a\tau +b}{c\tau + d},\frac{\mathfrak{z}}{c\tau + d} 
\right)& = (c\tau + d)^k 
\exp{\left(i \pi t \frac{c(\mathfrak{z},\mathfrak{z})}{c 
\tau + d}\right)} \varphi ( \tau, \mathfrak{z} ), \quad \left(\begin{array}{cc} 
a & b \\ 
c & d
\end{array}
\right)   \in \SL_2(\ZZ), \\
\varphi (\tau, \mathfrak{z}+ x \tau + y)&= 
\exp{\bigl(-i \pi t ( (x,x)\tau +2(x,\mathfrak{z}))\bigr)} 
\varphi (\tau, \mathfrak{z} ), \quad x,y \in L,
\end{align*}
and if its Fourier expansion  takes the form
\begin{equation*}
\varphi ( \tau, \mathfrak{z} )= \sum_{n\geq n_0 }\sum_{\ell\in L^\vee}f(n,
\ell)q^n\zeta^\ell,
\end{equation*}
where $n_0$ is a constant, $q=e^{2\pi i \tau}$ and $\zeta^\ell=e^{2\pi i (\ell,
\mathfrak{z})}$. 
If $f(n,\ell) = 0$ whenever $n < 0$,
then $\varphi$ is called a {\it weak} Jacobi form.  If $ f(n,\ell) = 0 $ whenever
$ 2nt - (\ell,\ell) < 0 $ (resp. $\leq 0$),
then $\varphi$ is called a {\it holomorphic}
(resp. {\it cusp}) Jacobi form. 
\end{definition}

We denote by $J^{!}_{k,L,t}$ (resp. $J^{w}_{k,L,t}$, $J_{k,L,t}$, 
$J_{k,L,t}^{\text{cusp}}$) the vector space of weakly holomorphic Jacobi 
forms (resp. weak, holomorphic, cusp Jacobi forms) 
of weight $k$ and index $t$ for $L$. 
The classical Jacobi forms defined by Eichler--Zagier \cite{EZ85} $J_{k,N}$
are identical to the Jacobi forms $J_{k,A_1, N}$ for the lattice  $A_1=\latt{\ZZ, 2x^2}$.

By definition, each Fourier-Jacobi coefficient $\phi_m$ is a holomorphic Jacobi form of weight $k$ and index $m$ associated to $L$. For the lattice $M$ not containing $2U$, the similar Fourier-Jacobi coefficients are holomorphic Jacobi forms with respect to some congruence subgroup of $\SL_2(\ZZ)$.

\subsection{Reflective modular forms}
Let $M$ be an even lattice of signature $(2,n)$ with $n\geq 3$. We set $\cD(M)=\cD_n$.
For any $r\in M^\vee$ of negative norm, the hyperplane
\begin{equation}
 \cD_r(M)=r^\perp\cap \cD(M)=\{ [\mathcal{Z}]\in \cD(M) : (\mathcal{Z},r)=0\}
\end{equation}
is called the rational quadratic divisor associated to $r$.
The reflection fixing $\cD_r(M)$ is defined as
\begin{equation}
\sigma_r(x)=x-\frac{2(r,x)}{(r,r)}r,  \quad x\in M.
\end{equation}
The hyperplane $\cD_r(M)$ is called the mirror of $\sigma_r$.
A primitive vector $r\in M$ of negative norm is called reflective if $\sigma_r\in\Orth^+(M)$, in which case we call $\cD_r(M)$ a reflective divisor. For $\lambda \in D(M)$ and $m\in \QQ$, we define
\begin{equation}
\cH(\lambda,m)=\bigcup_{\substack{v \in M+\lambda \\ (v,v)=2m}}  \cD_v(M)
\end{equation}
as the Heegner divisor of discriminant $(\lambda,m)$.

A primitive vector $l\in M$ with $(l,l)=-2d$ is reflective if and only if $\div(l)=2d$ or $d$.  We set $\lambda=[l/\div(l)]\in D(M)$. If $\div(l)=2d$, then $\cD_\lambda(M)$ is contained in $\cH(\lambda,-1/(4d))$. If $\div(l)=d$, then it is contained in 
$$\cH\left(\lambda,-\frac{1}{d}\right)-\sum_{2\nu=\lambda}\cH\left(\nu,-\frac{1}{4d}\right).$$
 
A modular form $F$ for $\Gamma<\Orth^+(M)$ is called reflective if its zero divisor is a sum of some reflective divisors. In particular, $F$ is called 2-reflective if its support of zero divisor is contained in $\cH(0,-1)$, and is called a modular form with complete 2-divisor if $\div(F)=\cH(0,-1)$. Reflective modular forms are very rare (see \cite{Ma17, Ma18, Wan18}) and have many applications on the theory of generalized Kac-Moody algebras, reflection groups and in algebraic geometry (see e.g. \cite{Bor00, Sch06, GN18, Gri18}). We refer to \cite{Sch06, Sch17, Dit19, Wan19} for some classification results of reflective modular forms. 

Borcherds’ singular theta correspondence (see \cite{Bor98} or \cite{Bru02}) is a powerful way to construct modular forms for orthogonal groups. It maps modular forms for the Weil representation to orthogonal modular forms which are often called Borcherds products. By \cite{Bru14}, every reflective modular form for $\widetilde{\Orth}^+(M)$ is a Borcherds product of some vector-valued modular form for the Weil representation of $\SL_2(\ZZ)$ attached to the discriminant form $D(M)$ if $M$ can be represented as $U\oplus U(m)\oplus L(-1)$. In this paper, we use the following variant of Borcherds product due to Gritsenko-Nikulin in the context of Jacobi forms.

\begin{theorem}[Theorem 4.2 in \cite{Gri18}]
\label{th:Borcherds}
We fix an ordering $\ell >0$ in $L^\vee$ in a way similar to positive root systems (see the bottom of page $825$ in \cite{Gri18}). Let 
$$
\varphi(\tau,\mathfrak{z})=\sum_{n\in\ZZ, \ell\in L^\vee}f(n,\ell)q^n 
\zeta^\ell \in J^{!}_{0,L,1}.
$$
Assume that $f(n,\ell)\in \ZZ$ for all $2n-(\ell,\ell)\leq 0$. 
There is a meromorphic modular form of weight 
$\frac{1}{2}f(0,0)$ and character $\chi$ with respect to  
$\widetilde{\Orth}^+(2U\oplus L(-1))$ defined as
$$
\Borch(\varphi)(Z)=q^A \zeta^{\vec{B}} \xi^C\prod_{\substack{n,m\in\ZZ, \ell
\in L^\vee\\ (n,\ell,m)>0}}(1-q^n \zeta^\ell \xi^m)^{f(nm,\ell)}, 
$$ 
where $Z= (\tau,\mathfrak{z}, \omega) \in \cH (L)$, 
$q=\exp(2\pi i \tau)$, 
$\zeta^\ell=\exp(2\pi i (\ell, \mathfrak{z}))$, $\xi=\exp(2\pi i \omega)$,
the notation $(n,\ell,m)>0$ means that either $m>0$, or $m=0$ 
and $n>0$, or $m=n=0$ and $\ell<0$, and
$$
A=\frac{1}{24}\sum_{\ell\in L^\vee}f(0,\ell),\quad
\vec{B}=\frac{1}{2}\sum_{\ell>0} f(0,\ell)\ell, \quad C=\frac{1}{2\rank(L)}\sum_{\ell\in L^\vee}f(0,\ell)(\ell,\ell).
$$

The character $\chi$ is induced by the character of the first Fourier-Jacobi coefficient of 
$\Borch(\varphi)$ and by the relation $\chi(V)=(-1)^D$, where
$V: (\tau,\mathfrak{z}, \omega) \to (\omega,\mathfrak{z},\tau)$, 
and $D=\sum_{n<0}\sigma_0(-n) f(n,0)$.

The poles and zeros of $\Borch(\varphi)$ lie on the rational quadratic 
divisors $\cD_v$, where $v\in 2U\oplus L^\vee(-1)$ is a primitive vector 
with $(v,v)<0$. The multiplicity of this divisor is given by 
$$ \mult \cD_v = \sum_{d\in \ZZ,d>0 } f(d^2n,d\ell),$$
where $n\in\ZZ$, $\ell\in L^\vee$ such that 
$(v,v)=2n-(\ell,\ell)$ and $v-(0,0,\ell,0,0)\in 2U\oplus L(-1)$.

The vector $(A, \vec{B}, C)$ is called the Weyl vector of the Borcherds product.
\end{theorem}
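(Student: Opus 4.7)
The plan is to reduce this statement to Borcherds' singular theta correspondence~\cite{Bor98}, which is the canonical source of infinite product expansions of the stated shape. First I would convert the input datum. Via the theta decomposition
$$
\varphi(\tau,\mathfrak{z}) = \sum_{\gamma \in D(L)} h_\gamma(\tau)\, \theta_{L,\gamma}(\tau,\mathfrak{z}),
$$
one passes from $\varphi \in J^!_{0,L,1}$ to a weakly holomorphic vector-valued modular form $(h_\gamma)_\gamma$ of weight $-\rank(L)/2$ for the Weil representation of $\SL_2(\ZZ)$ on $\CC[D(L)]$. This is exactly the type of input Borcherds feeds into his theta lift with respect to the lattice $M = 2U \oplus L(-1)$, whose discriminant form agrees, up to sign, with that of $L$.

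Second, I would apply the singular theta lift to produce a $\widetilde{\Orth}^+(M)$-invariant real analytic function $\Phi$ on $\cH(L)$, defined as a regularized integral of $(h_\gamma)_\gamma$ against the Siegel theta function of $M$. Automorphy and the weight $\tfrac{1}{2}f(0,0)$ are built into the formalism. The holomorphic function $\Borch(\varphi)$ is then extracted as the exponential of the holomorphic part of $-\Phi/2$, so that $\Phi$ plays the role of $-\log|\Borch(\varphi)|^2$ up to a normalization constant depending only on the zero-Fourier data of $\varphi$.

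The main computation is to unfold the theta integral at the $0$-dimensional cusp $c = e_1$ with $b=f_1$, so that $\Theta_M$ splits according to vectors of the first hyperbolic plane $U_1$. The Fourier coefficients $f(nm,\ell)$ indexed by a positive triple $(n,\ell,m)$ with $f(nm,\ell)\neq 0$ contribute the logarithms $f(nm,\ell)\log|1-q^n\zeta^\ell\xi^m|$, while the zero-Fourier part, after Hurwitz zeta regularization, reproduces the Weyl vector $(A,\vec B,C)$ in the claimed closed form; the three sums over $\ell \in L^\vee$ in the statement are precisely the output of this regularization. The divisor description and the multiplicity $\sum_{d>0}f(d^2 n, d\ell)$ emerge from the logarithmic singularities of the theta integral across the hyperplanes on which the Gaussian factor of $\Theta_M$ fails to suppress the divergent Fourier modes: a primitive $v\in 2U\oplus L^\vee(-1)$ with $(v,v)<0$ contributes, and summing over its positive integer multiples in the support of $f$ gives the multiplicity formula.

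The hardest step, as always in this circle of ideas, is the rigorous regularization and the extraction of the holomorphic factor from the real-analytic lift. Once this is carried out, the character $\chi$ is pinned down on two natural classes of generators of $\widetilde{\Orth}^+(M)$: the Jacobi subgroup $\Gamma^J(L)$ acts through the character inherited from the first Fourier-Jacobi coefficient, which one reads off from the $\xi$-leading term of the product, while the involution $V:(\tau,\mathfrak{z},\omega)\mapsto(\omega,\mathfrak{z},\tau)$ swapping the two hyperbolic planes multiplies the product by an explicit sign obtained by reorganizing the factors under $n\leftrightarrow m$. Tracking this reorganization and isolating the contribution of the polar head $\sum_{n<0}f(n,0)q^n$ yields the sign $(-1)^D$ with $D = \sum_{n<0}\sigma_0(-n)f(n,0)$, completing the assertion.
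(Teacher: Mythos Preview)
The paper does not prove this theorem at all: it is quoted verbatim as Theorem~4.2 of \cite{Gri18} and used as a black box. Your sketch via the theta decomposition $\varphi \leftrightarrow (h_\gamma)_\gamma$ and Borcherds' regularized theta lift \cite{Bor98} is precisely the standard route by which results of this type are established, and is presumably how \cite{Gri18} proceeds; so there is nothing to compare against in the present paper, and your outline is an accurate account of the underlying argument.
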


\subsection{The Jacobian determinant of orthogonal modular forms}
The following Rankin-Cohen-Ibukiyama differential operators will play a vital role in this paper. It was first introduced in \cite{AI05} for Siegel modular forms. We here prove more properties of this operator for orthogonal modular forms. 

\begin{theorem}\label{th:Jacobian}
Let $M$ be an even lattice of signature $(2,n)$ with $n\geq 3$, and let $\Gamma<\Orth^+(M)$ be a finite index subgroup. Let $f_i\in M_{k_i}(\Gamma)$ for $1\leq i \leq n+1$. We view $f_i$ as modular forms on the tube domain at a given zero-dimensional cusp. Let $z_i$, $1\leq i \leq n$, be the coordinates of the tube domain. We define 
$$
J:=J(f_1,...,f_{n+1})=\left\lvert \begin{array}{cccc}
k_1f_1 & k_2f_2 & \cdots & k_{n+1}f_{n+1} \\ 
\frac{\partial f_1}{\partial z_1} & \frac{\partial f_2}{\partial z_1} & \cdots & \frac{\partial f_{n+1}}{\partial z_1} \\ 
\vdots & \vdots & \ddots & \vdots \\ 
\frac{\partial f_1}{\partial z_n} & \frac{\partial f_2}{\partial z_n} & \cdots & \frac{\partial f_{n+1}}{\partial z_n}
\end{array}   \right\rvert.
$$
\begin{enumerate}
\item The function $J$ is a modular form of weight $n+\sum_{i=1}^{n+1}k_i$ for $\Gamma$ with the character $\det$, where $\det$ is the determinant.
\item The function $J$ is not identically zero if and only if the $n+1$ modular forms $f_i$ are algebraically independent over $\CC$.
\item The function $J$ is a cusp form.
\item Let $r\in M$. If the reflection $\sigma_r$ belongs to $\Gamma$, then $J$ vanishes on the hyperplane $\cD_r(M)$.
\item Assume that $M=2U\oplus L(-1)$ and $\widetilde{\Orth}^+(M)<\Gamma$. We define $J$ at the standard 1-dimensional cusp determined by $2U$. Then the Fourier-Jacobi expansion of $J$ satisfies $J=q^{n-1}\xi^{n-1}(\cdots)$, i.e.
$$
J(Z)=\sum_{\substack{a,t\in \NN\\ a, t\geq n-1}}\sum_{\substack{\ell \in L^\vee \\2at\geq (\ell,\ell)}} f(a,\ell,t)q^a\zeta^\ell\xi^t.
$$
\end{enumerate}
\end{theorem}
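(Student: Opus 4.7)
My plan is to handle the five parts in the order stated, using the chain rule for modularity, the classical Jacobian criterion for algebraic independence, and a careful analysis of Fourier and Fourier--Jacobi expansions for the remaining properties.

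For part (1) I introduce the auxiliary $(n+1)\times(n+1)$ matrix $N(Z)$ whose first row is $(k_i J_{c,b}^{k_i} f_i)_i$ and whose $\beta$-th row ($\beta\geq 1$) is $(\partial_\beta(J_{c,b}^{k_i} f_i))_i$. By elementary row and column operations starting from $M(Z)$ one immediately checks $\det N = J_{c,b}^{\sum k_i}\det M$. On the other hand, differentiating both sides of $f_i(g\langle Z\rangle) = J_{c,b}(g,Z)^{k_i} f_i(Z)$ in $z_\beta$ and applying the chain rule on the left yields $N_{\beta,i} = \sum_\alpha T_{\alpha\beta}\,(\partial_\alpha f_i)(g\langle Z\rangle)$, i.e.\ $N = \mathrm{diag}(1,T^{\top})\,M(g\langle Z\rangle)$ with $T=\partial(g\langle Z\rangle)/\partial Z$. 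Equating the two expressions for $\det N$ gives
\[
J(g\langle Z\rangle)=(\det T)^{-1}\,J_{c,b}(g,Z)^{\sum k_i}\, J(Z),
\]
and invoking the standard identity $\det T(g,Z)=\det(g)\,J_{c,b}(g,Z)^{-n}$ (which expresses $K_{\cD(M)}\cong \bar\pi^{\,n}\otimes\det$ on the regular locus) produces weight $n+\sum k_i$ and character $\det$.

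Parts (2) and (4) then follow quickly. For (2), any weighted-homogeneous polynomial relation $P(f_1,\dots,f_{n+1})=0$ yields a nontrivial kernel vector $(\partial P/\partial f_i)|_{(f_j(Z))}$ of $M(Z)^{\top}$ by combining the chain rule with Euler's identity $\sum_i k_i f_i(\partial P/\partial f_i)=(\deg P)P=0$, forcing $J\equiv 0$; conversely, if $J\equiv 0$ then the extended gradients of $f_1,\dots,f_{n+1}$ are pointwise linearly dependent, and together with the fact that the transcendence degree of $M_*(\Gamma)$ over $\CC$ equals $n+1$ this yields an algebraic relation. For (4), since $\sigma_r$ has $\det(\sigma_r)=-1$ and fixes $\cD_r(M)$ pointwise, the modularity law from (1) collapses to $J=-J$ on $\cD_r(M)$, so $J|_{\cD_r(M)}\equiv 0$.

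Parts (3) and (5) rest on the fact that differentiation annihilates constant terms. At any $0$-dimensional cusp with Fourier expansion $f_i=\sum_l \hat f_i(l)\,e^{2\pi i(l,Z)}$, the constant term of $\partial_{z_\alpha} f_i$ vanishes since the $l=0$ summand contributes $l_\alpha=0$, so rows $1,\dots,n$ of $M$ have zero constant term and hence so does $\det M$. For (5), at the $1$-dimensional cusp determined by $2U$, expand $f_i=\sum_{t\ge 0}\phi_{i,t}(\tau,\mathfrak{z})\xi^t$. The $\xi^0$-coefficient of $\partial_\omega f_i$ vanishes (factor of $t$), and the $\xi^0$-coefficient of $\partial_{\mathfrak{z}_j}f_i$ equals $\partial_{\mathfrak{z}_j}\phi_{i,0}$, which vanishes because $\phi_{i,0}$ is a holomorphic Jacobi form of index $0$ and therefore descends, for each fixed $\tau$, to a holomorphic function on the compact complex torus $\CC^{n-2}/(L+L\tau)$, forcing it to be $\mathfrak{z}$-independent. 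Thus $n-1$ rows of $M(Z)$ are divisible by $\xi$ and $\xi^{n-1}\mid J$. The involution $V$ exchanging the two hyperbolic planes acts trivially on $D(M)=D(L)$, hence lies in $\widetilde{\Orth}^+(M)\subset\Gamma$, and swaps $\tau\leftrightarrow\omega$, producing the symmetric divisibility $q^{n-1}\mid J$. The same index-$0$ Jacobi-form argument, applied in coordinates adapted to each isotropic plane, handles the $1$-dimensional cusps in (3).

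The main obstacle I anticipate is part (1), specifically verifying that the Jacobian of the tube-domain action contributes exactly the character $\det$ (as opposed to a twist by another character of $\Orth^+(M)$); the other four parts are essentially mechanical once the determinantal structure of $J$ and the behavior of derivatives on constant Fourier coefficients are in hand.
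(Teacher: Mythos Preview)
Your argument is largely correct and parallels the paper's in parts (2), (4), and (5); indeed for (5) you supply more detail than the paper (which says only that it ``follows from the number of partial derivatives''), and both your index-$0$ Jacobi-form observation for the $\xi$-direction and your use of the exchange involution $V\in\widetilde{\Orth}^+(M)$ for the $q$-direction are clean. The genuine difference lies in how (1) and (3) are organized. The paper lifts $J$ to an ordinary Jacobian $\hat J=\partial(g_2,f_1,\dots,f_{n+1})/\partial(x_1,\dots,x_{n+2})$ on the affine cone $\mathcal L_n$, where $g_2(\mathcal Z)=(\mathcal Z,\mathcal Z)$ and the $df_i$ are extended (nonuniquely, but harmlessly modulo $dg_2$) off the quadric. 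This buys two things at once: the modularity in (1) is immediate, since under $g\in\Gamma$ an ordinary Jacobian of homogeneous functions picks up exactly $\det(g)$ times the correct power of the scaling; and, crucially for (3), $\hat J$ is visibly independent of any choice of cusp, so one may compute its Fourier or Fourier--Jacobi expansion at \emph{every} $0$-dimensional cusp as a Jacobian determinant in the corresponding tube-domain coordinates. Your chain-rule derivation of (1) on a fixed tube domain is correct, but when in (3) you write ``applied in coordinates adapted to each isotropic plane'' you are tacitly using that $J$, originally defined as a determinant in one tube domain, again has Jacobian-determinant form in every other tube domain. That is true---it is another chain-rule computation, or equivalently the content of the $\hat J$ construction---but it is precisely the nontrivial point in (3) and should be made explicit.
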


\begin{proof} 
\begin{itemize}
\item[(1)] The proof is similar to that of \cite[Proposition 2.1]{AI05}.

\item[(2)] Suppose that $J\neq 0$. If the $n+1$ modular forms $f_i$ are not algebraically independent over $\CC$, then there exists a non-zero polynomial $P$ over $\CC$ in $n+1$ variables such that $P(f_1,...,f_{n+1})=0$. We write 
$$
P(X_1,...,X_{n+1})=\sum_{(i_1,...,i_{n+1})\in \NN^{n+1}} c(i_1,...,i_{n+1}) X_1^{i_1}\cdots X_{n+1}^{i_{n+1}}.
$$
We can assume that $\sum_{j=1}^{n+1} k_j i_j$ is a fixed constant $c$ for any $(i_1,...,i_{n+1})\in \NN^{n+1}$ due to modularity. Considering the differentials of $P(f_1,...,f_{n+1})$ with respect to $z_1$,...,$z_n$ respectively, we obtain the following system of linear equations 
$$
\mathbf{J} \left(\frac{\partial P}{\partial f_1}, \frac{\partial P}{\partial f_2},...,\frac{\partial P}{\partial f_{n+1}}  \right)^t = \left( cP, \frac{\partial P}{\partial z_1},...,\frac{\partial P}{\partial z_n} \right)^t=0,
$$
where $\mathbf{J}$ is the Jacobian matrix in the definition of $J$. This leads to a contradiction. 

Conversely,  if these $f_i$ are algebraically independent over $\CC$, then the $n$ functions $f_2^{k_1}/f_1^{k_2}$, ..., $f_{n+1}^{k_1}/f_1^{k_{n+1}}$ are local parameters of the $n$-dimensional variety $\cD_n/\Gamma$. Therefore their usual Jacobian determinant equal to $J$ up to a non-zero multiple is not identically zero. 

\item[(3)] It suffices to show that $J$ vanishes on every rational boundary component. We first define the  Jacobian on the cone $\mathcal{L}_n$ following \cite[\S 9]{Vin13} such that the definition of $J$ is independent of $0$-dimensional cusps. In this way, it is easier to consider the Fourier expansions of $J$ at any cusps. To this end, we add a function $g_2(\mathcal{Z})=(\mathcal{Z},\mathcal{Z})$ for $\mathcal{Z}\in \CC^{2,n}$. The functions $f_i$ are defined only on an open subset of the hypersurface $\{g_2=0\}$, and the differentials $df_i$ of $f_i$ at a point of $\mathcal{L}_n$ are linear forms on the tangent space of the hypersurface $\{g_2=0\}$. Obviously, we can extend these linear forms to the larger vector space $\CC^{2,n}$. These extensions are not unique but they are defined up to addition of some multiples of the differential form $dg_2$. Let $x_1,x_2,...,x_{n+2}$ be the coordinates of $\CC^{2,n}$. With the help of these extensions of $df_i$, we can consistently define a holomorphic function on $\mathcal{L}_n$ as the usual Jacobian
$$
\hat{J}=\frac{\partial (g_2, f_1, f_2, ..., f_{n+1})}{\partial (x_1,x_2,...,x_{n+2})}.
$$
The function $\hat{J}$ is well defined and independent of the choice of these extensions because the difference between two extensions of $df_i$ is a scalar of $dg_2$ at a given point of $\mathcal{L}_n$.  By direct calculations, we find that the reduction of $\hat{J}$ on the tube domain coincides with the above $J$ up to some non-zero multiple.  We then conclude that the definition of $J$ does not depend on the choice of 0-dimensional cusps up to some non-zero multiple. Besides, the definition of $\hat{J}$ implies the assertion $(1)$ immediately.

We now prove that $J$ vanishes on cusps. At a 0-dimensional cusp, the value of $J$ is equal to the constant term of its Fourier expansion at this cusp.  It is easy to see from the definition of $J$ that the constant term must be zero. At an 1-dimensional cusp, the value of $J$ is given by the Siegel operator. More precisely, it is equal to the zeroth coefficient of the Fourier-Jacobi expansion of $J$ at this cusp, which is a modular form with respect to a congruence subgroup of $\SL_2(\ZZ)$. The partial derivatives with respect to some coordinates will cancel the zeroth Fourier-Jacobi coefficients of these $f_i$. It follows that the value of $J$ at an 1-dimensional cusp is zero.
Therefore $J$ is a cusp form.
\item[(4)] If $\sigma_r\in \Gamma$, then $J(\sigma_r(\mathcal{Z}))=\det(\sigma_r)J(\mathcal{Z})=-J(\mathcal{Z})$. It follows that $J(\mathcal{Z})=-J(\mathcal{Z})$ if $(\mathcal{Z},r)=0$, which yields that $J$ vanishes on the hyperplane $r^\perp$.
\item[(5)] It follows from the number of partial derivatives in the definition of $J$.
\end{itemize}
\end{proof}

\section{Necessary conditions to be free algebras}\label{sec:condition}
In this section we prove some necessary conditions for the space $M_*(\Gamma)$ being a free algebra.

Let $\Gamma < \Orth^+_{2,n}$ be an arithmetic subgroup. The maximal spectrum $\Spm(M_*(\Gamma))$ can be viewed as the ``affine span'' $(\mathcal{L}_n/\Gamma)^*$ of the quotient space $\mathcal{L}_n/\Gamma$. Recall that the weighted projective space $\PP(a_1,...,a_{n+1})$ is defined as the quotient space 
$$
(\CC^{n+1}-\{0\})/ \sim,
$$
where $\sim$ is an equivalent relation defined as 
$$
(x_1,...,x_{n+1})\sim (y_1,...,y_{n+1}) \Leftrightarrow \exists\; \lambda\in \CC^*\; \text{s.t.}\; x_i=\lambda^{a_i}y_i\; \text{for any $1\leq i\leq n+1$}.
$$

Assume that $M_*(\Gamma)$ is a free algebra and $f_i\in M_{k_i}(\Gamma)$, $1\leq i \leq n+1$, are free generators. Then the Satake-Baily-Borel compactification $(\cD_n/\Gamma)^*=\Proj (M_*(\Gamma))$ is the weighted projective space $\PP(k_1,...,k_{n+1})$. Moreover, the manifold $\Spm(M_*(\Gamma))=(\mathcal{L}_n/\Gamma)^*$ is the affine space $\CC^{n+1}$ and thus has no singular points. 

The next two results will be used later.
\begin{theorem}[\cite{Got69}]
\label{th:Got}
Let $\Gamma$ be a discrete group of analytic automorphisms of a complex manifold $X$, and let $\pi: X\to X/\Gamma$ be the natural morphism. A point $\pi(x)$ is nonsingular if and only if the stabilizer $\Gamma_x$ of $x$ in $\Gamma$ is generated by reflections whose mirrors pass through $x$. 
\end{theorem}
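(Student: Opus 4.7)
The plan is to reduce the statement to the classical Chevalley--Shephard--Todd theorem by linearizing the action of the stabilizer near the fixed point. Because $\Gamma$ acts properly discontinuously and analytically on $X$, the stabilizer $\Gamma_x$ is a finite group of biholomorphisms of some neighborhood of $x$. The key preliminary step is Bochner's linearization: by averaging an arbitrary local holomorphic chart at $x$ over the finite group $\Gamma_x$, one constructs a $\Gamma_x$-equivariant biholomorphism between a $\Gamma_x$-stable neighborhood of $x$ in $X$ and a neighborhood of $0$ in the tangent space $T_xX$, on which $\Gamma_x$ acts through its linear representation $\gamma \mapsto d\gamma_x$. Consequently, the germ of $X/\Gamma$ at $\pi(x)$ coincides with the germ of $T_xX/\Gamma_x$ at the origin, and the completed local ring of $X/\Gamma$ at $\pi(x)$ is $\CC[[z_1,\ldots,z_n]]^{\Gamma_x}$ for linear coordinates $z_i$ on $T_xX$.

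Nonsingularity of $\pi(x)$ is then equivalent to this invariant ring being a formal power series ring in $n$ variables, and by a standard graded-to-formal argument this is equivalent to the polynomial invariant ring $\CC[z_1,\ldots,z_n]^{\Gamma_x}$ being freely generated. The Chevalley--Shephard--Todd theorem supplies exactly the required equivalence: the polynomial invariant ring is free if and only if $\Gamma_x$ acts on $T_xX$ as a group generated by pseudo-reflections. To finish, I identify pseudo-reflections in the linearized representation with reflections of $\Gamma$ whose mirrors pass through $x$: the differential at $x$ of any reflection in $\Gamma$ with smooth mirror through $x$ pointwise fixes the tangent hyperplane and is therefore a pseudo-reflection on $T_xX$; conversely, under the linearization each pseudo-reflection element of $\Gamma_x$ corresponds to an analytic transformation of the neighborhood whose fixed locus is a smooth hypersurface through $x$, that is, a reflection in $\Gamma$ with mirror through $x$.

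The main technical obstacle is justifying the linearization of the $\Gamma_x$-action: one needs the Bochner-type averaging trick, which depends crucially on the finiteness of $\Gamma_x$ and on the existence of a $\Gamma_x$-stable neighborhood of $x$ (guaranteed by proper discontinuity). Once the linearization is available, the rest is a direct invocation of the Chevalley--Shephard--Todd theorem together with the routine bookkeeping that matches reflections on $X$ with mirror through $x$ to pseudo-reflections in the linear representation of $\Gamma_x$ on $T_xX$.
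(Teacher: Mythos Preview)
The paper does not prove this theorem; it is quoted from Gottschling \cite{Got69} and used as a black box in the proofs of Proposition~\ref{prop:group} and Theorem~\ref{th:freeJacobian}. Your sketch via Bochner linearization followed by the Chevalley--Shephard--Todd theorem is correct and is the standard argument (and essentially what Gottschling does). One small point worth making explicit: before linearizing you are implicitly using that a sufficiently small $\Gamma_x$-stable neighborhood $U$ of $x$ satisfies $U/\Gamma_x \cong \pi(U)$ as complex spaces, so that the local ring of $X/\Gamma$ at $\pi(x)$ really is computed by the $\Gamma_x$-invariants; this is a routine consequence of proper discontinuity, but it is the step that connects the global quotient to the local linear model.
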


\begin{theorem}[\cite{Arm68}]
\label{th:Arm}
Let $\Gamma$ be a discrete group of homeomorphisms of a path connected topological space $X$. If the quotient space $X/\Gamma$ is simply connected, then $\Gamma$ is generated by elements having fixed points in $X$. 
\end{theorem}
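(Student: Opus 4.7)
The plan is to show that the normal subgroup $N \triangleleft \Gamma$ generated by all elements possessing a fixed point in $X$ is actually all of $\Gamma$, by arguing that any proper inclusion $N \subsetneq \Gamma$ would produce a non-trivial connected cover of $X/\Gamma$, contradicting simple connectedness.

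First I would verify that $N$ is normal: if $g$ fixes some $x \in X$, then $hgh^{-1}$ fixes $hx$, so the set of elements possessing a fixed point is closed under conjugation, and hence the subgroup it generates is normal in $\Gamma$. Second, I would analyze the induced action of $\Gamma/N$ on $X/N$ and establish that it is \emph{free}. Indeed, if some coset $gN$ fixes a class $[x] \in X/N$, then $g\cdot x = n\cdot x$ for some $n \in N$, whence $n^{-1}g$ fixes $x$; by definition this places $n^{-1}g$ in $N$, forcing $gN$ to be trivial.

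Third, I would invoke covering space theory. Combined with the discreteness of $\Gamma$, the free action of $\Gamma/N$ on $X/N$ is properly discontinuous, so the natural quotient map $X/N \to (X/N)/(\Gamma/N) = X/\Gamma$ is a regular covering projection with deck group $\Gamma/N$. Since $X/N$ is path-connected (being the image of the path-connected $X$) and a connected cover of a simply connected base must be a homeomorphism, I would conclude that $\Gamma/N$ is trivial, hence $\Gamma = N$, as desired.

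The main obstacle is the third step: promoting the algebraic freeness of the $\Gamma/N$-action on $X/N$ to an honest covering-space structure. This requires sufficient local regularity of $X$ (e.g.\ local path-connectedness and semi-local simple connectedness) and enough tameness of the original $\Gamma$-action for the quotient topologies to cooperate with the Galois correspondence between subgroups of $\pi_1(X/\Gamma)$ and connected covers. Armstrong's proof in \cite{Arm68} handles these hypotheses carefully via a direct path-lifting argument that bypasses explicit covering machinery; in the applications here, where $X$ is an analytic manifold and $\Gamma$ is arithmetic acting properly discontinuously, all regularity conditions are automatically satisfied, so this obstacle is only a technical one.
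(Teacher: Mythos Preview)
The paper does not give a proof of this theorem at all: it is stated with a citation to \cite{Arm68} and used as a black box in the proof of Proposition~\ref{prop:group}. So there is no ``paper's own proof'' against which to compare your attempt.

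That said, your outline is the standard argument behind Armstrong's result, and you have correctly isolated the one genuine subtlety: passing from the free action of $\Gamma/N$ on $X/N$ to a covering map $X/N \to X/\Gamma$ needs some local hypotheses on $X$ and on the action. As you note, Armstrong circumvents this by a direct path-lifting construction rather than appealing to the Galois correspondence of covering theory, and in the paper's setting (an analytic manifold with a properly discontinuous arithmetic group action) the required regularity is automatic. So your sketch is sound for the purposes of this paper, and essentially matches the argument one finds in \cite{Arm68}.
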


The following result is a special case of \cite[Proposition 8.3]{VP89}. We give it a short proof.
\begin{proposition}\label{prop:group}
If $M_*(\Gamma)$ is a free algebra, then $\Gamma$ is generated by reflections.
\end{proposition}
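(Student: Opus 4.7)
The plan is to combine Armstrong's theorem (Theorem \ref{th:Arm}) with Gottschling's theorem (Theorem \ref{th:Got}), applied to the action of $\Gamma$ on the affine cone $\mathcal{L}_n$. The assumption that $M_*(\Gamma)$ is free delivers both the simple connectedness needed for the former and the nonsingularity needed for the latter.

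First I would use a choice of free generators $f_1,\dots,f_{n+1}$ of $M_*(\Gamma)$ to identify $\Spm(M_*(\Gamma)) = (\mathcal{L}_n/\Gamma)^*$ with the smooth affine variety $\CC^{n+1}$, via the evaluation map $\mathcal{Z}\mapsto (f_1(\mathcal{Z}),\dots,f_{n+1}(\mathcal{Z}))$. The open subvariety $\mathcal{L}_n/\Gamma$ is obtained from this $\CC^{n+1}$ by deleting the affine cone over the boundary $(\cD_n/\Gamma)^*\setminus \cD_n/\Gamma$. Since that boundary (consisting of $0$- and $1$-dimensional cusps in the Satake--Baily--Borel compactification) has complex dimension at most $1$, its affine cone has complex dimension at most $2$, hence complex codimension at least $n-1\geq 2$ in $\CC^{n+1}$. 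Therefore $\mathcal{L}_n/\Gamma$ is smooth and simply connected.

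Second, Armstrong's theorem applied to the discrete action of $\Gamma$ on the path-connected complex manifold $\mathcal{L}_n$ yields that $\Gamma$ is generated by elements having fixed points in $\mathcal{L}_n$. For any such generator $g$ with $g(x)=x$, the image $\pi(x)\in \mathcal{L}_n/\Gamma$ is a smooth point of the quotient, so Gottschling's theorem shows that the stabilizer $\Gamma_x$, which contains $g$, is generated by reflections whose mirrors pass through $x$; here I use that a reflection $\sigma_r\in\Gamma\subset \Orth^+_{2,n}$ extends to a genuine complex reflection on $\CC^{2,n}\supset \mathcal{L}_n$. Writing each generating element of $\Gamma$ as a product of such reflections concludes that $\Gamma$ itself is generated by reflections.

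The main obstacle is the topological verification that $\mathcal{L}_n/\Gamma$ is simply connected. One must justify that the identification $(\mathcal{L}_n/\Gamma)^*\cong \CC^{n+1}$ really identifies $\mathcal{L}_n/\Gamma$ with the complement of a subset of complex codimension at least two, a point that ultimately rests on the description of $(\cD_n/\Gamma)^*$ as $\Proj(M_*(\Gamma))$ and on the bound $\dim \cD_n/\Gamma = n\geq 3$. Once this codimension estimate is in hand, the two applications of Armstrong and Gottschling are entirely formal.
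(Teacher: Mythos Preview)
Your proposal is correct and follows essentially the same route as the paper: identify $(\mathcal{L}_n/\Gamma)^*$ with $\CC^{n+1}$, observe that $\mathcal{L}_n/\Gamma$ is its complement of a closed subset of complex codimension $\geq 2$ (the paper phrases this as ``adding zero and finitely many one- and two-dimensional cones''), and then apply Armstrong followed by Gottschling. Your write-up is in fact a bit more explicit than the paper's about the codimension count and about how Gottschling converts ``generated by elements with fixed points'' into ``generated by reflections.''
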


\begin{proof}
Assume that $M_*(\Gamma)$ is a free algebra. Then $(\mathcal{L}_n/\Gamma)^*$ is an affine space. Since $(\mathcal{L}_n/\Gamma)^*$ is obtained from $\mathcal{L}_n/ \Gamma$ by adding zero and finitely many one- and two- dimensional cones, $\mathcal{L}_n/ \Gamma$ is smooth and simply connected. By Theorem \ref{th:Arm}, $\Gamma$ is generated by elements having fixed points. We then conclude from Theorem \ref{th:Got} that $\Gamma$ is generated by reflections.
\end{proof}

For any $r\in M$ with $(r,r)=-2$, we have $\sigma_r\in \widetilde{\Orth}^+(M)$. Conversely, 
as a consequence of Theorem 1.1 and Corollary 1.2 in \cite{GHS09}, one obtains the following sufficient condition for $\widetilde{\Orth}^+(M)$ to be generated by reflections.

\begin{lemma}\label{lem:group}
Let $M$ be an even lattice of signature $(2,n)$ with $n\geq 3$. Assume that $M$ contains an isotropic plane, represents $-2$, and $\rank_3(M)\geq 5$, $\rank_2(M)\geq 6$, where $\rank_p (M)$ is the maximal rank of the sublattices $M_1$ in $M$ such that $\det(M_1)$ is coprime to $p$. Then $\widetilde{\Orth}^+(M)$ is generated by $\sigma_r$ for $r\in M$ with $(r,r)=-2$.
\end{lemma}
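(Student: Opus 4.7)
The plan is to derive the lemma almost directly from the cited results of \cite{GHS09}, once one checks that every $\sigma_r$ with $r\in M$ of norm $-2$ indeed lies in $\widetilde{\Orth}^+(M)$ and that the hypotheses on $M$ match those required in \cite{GHS09}.

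First I would dispense with the trivial containment
\[
\bigl\langle \sigma_r : r\in M,\ (r,r)=-2 \bigr\rangle \;\subseteq\; \widetilde{\Orth}^+(M).
\]
For any such $r$ and any $x\in M^\vee$ one computes
\[
\sigma_r(x)-x \;=\; -\frac{2(r,x)}{(r,r)}\, r \;=\; (r,x)\, r,
\]
and since $r\in M$ one has $(r,x)\in\ZZ$, so $(r,x)\, r\in M$. Hence $\sigma_r$ acts as the identity on $D(M)=M^\vee/M$, and clearly $\sigma_r\in\Orth^+(M)$ because $-2$-reflections preserve the positive cone (they act as the identity on an orthogonal complement of signature $(2,n-1)$). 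Thus $\sigma_r\in\widetilde{\Orth}^+(M)$.

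For the opposite containment I would invoke \cite[Theorem 1.1 and Corollary 1.2]{GHS09}. These results assert, under precisely the hypotheses stated in the lemma—that $M$ splits off an isotropic plane (equivalently, $M$ splits a hyperbolic plane, since $n\ge 3$), represents $-2$, and satisfies the local rank bounds $\rank_3(M)\ge 5$ and $\rank_2(M)\ge 6$—that the discriminant kernel $\widetilde{\Orth}^+(M)$ equals its own subgroup generated by $-2$-reflections. Concretely, \cite[Theorem 1.1]{GHS09} identifies the abelianisation of $\widetilde{\Orth}^+(M)$ modulo the subgroup generated by $-2$-reflections and shows it is trivial under these hypotheses; \cite[Corollary 1.2]{GHS09} then translates this into the generation statement we need. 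Combining this with the trivial inclusion above yields the asserted equality.

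The only thing to verify carefully is the translation of conventions: one must check that the notion of ``isotropic plane,'' the definition of $\rank_p$ via sublattices of determinant coprime to $p$, and the use of the plus-sign subgroup $\widetilde{\Orth}^+(M)$ rather than the full stable orthogonal group all agree with those used in \cite{GHS09}. This is the only real obstacle, and it is a bookkeeping matter rather than a genuine mathematical one; no new computation is required beyond the short verification above.
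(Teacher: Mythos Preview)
Your proposal is correct and matches the paper's approach exactly: the paper does not give an independent proof of this lemma but simply records it as a direct consequence of \cite[Theorem~1.1 and Corollary~1.2]{GHS09}, after noting (as you verify) that every $\sigma_r$ with $(r,r)=-2$ lies in $\widetilde{\Orth}^+(M)$. Your added verification of the trivial containment is correct, and your caveat about checking conventions against \cite{GHS09} is the only remaining bookkeeping, just as in the paper.
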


The following theorem is vital to classify free algebras of orthogonal modular forms. The assertion (2) was stated in \cite[Proposition 6]{Vin13} with a brief idea of the proof. The last two assertions in the particular case of signature $(2,3)$ were mentioned at the end of \cite{Vin13}. We here give these results a full proof.

\begin{theorem}\label{th:freeJacobian}
Assume that $M_*(\Gamma)$ is a free algebra. Let $f_i\in M_{k_i}(\Gamma)$, $1\leq i \leq n+1$, be the generators.
\begin{enumerate}
\item The Jacobian determinant $J=J(f_1,...,f_{n+1})$ is not identically zero and it is a cusp form of weight $n+\sum_{i=1}^{n+1}k_i$ for $\Gamma$ with the character $\det$.

\item The zero divisor of $J$ is the sum of all mirrors of reflections in $\Gamma$ with multiplicity $1$. In particular, $J$ is a reflective cusp form.

\item Let $\{ \pi_1,...,\pi_s \}$ be the representatives of the $\Gamma$-equivalence classes of the mirrors of reflections in $\Gamma$. Then there exist a unique modular form $J_i$ for $\Gamma$ such that $\div(J_i)=2\cdot \Gamma \pi_i$ for each $1\leq i\leq s$, and $J^2=\prod_{i=1}^s J_i$.

\item There exist polynomials $P$, $P_i$, $1\leq i \leq s$, in $n+1$ variables over $\CC$ such that $J^2=P(f_1,...,f_{n+1})$ and $J_i =P_i(f_1,...,f_{n+1})$. Thus $P=\prod_{i=1}^s P_i$ and these $P_i$ are irreducible. 
\end{enumerate}
\end{theorem}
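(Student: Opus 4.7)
The strategy is to regard the free presentation $M_*(\Gamma)=\CC[f_1,\dots,f_{n+1}]$ as a geometric isomorphism. The $\Gamma$-invariant map
\[
\Phi\colon \mathcal{L}_n \to \CC^{n+1}, \qquad \mathcal{Z}\mapsto(f_1(\mathcal{Z}),\dots,f_{n+1}(\mathcal{Z}))
\]
descends to an isomorphism of affine varieties $(\mathcal{L}_n/\Gamma)^*\cong\CC^{n+1}$, and up to a nonvanishing factor $J$ is the ordinary Jacobian of $\Phi$ between $(n+1)$-dimensional complex manifolds; all four assertions will therefore reduce to reading off the ramification of $\Phi$. Assertion (1) then follows immediately from Theorem \ref{th:Jacobian}: freeness makes the $f_i$ algebraically independent over $\CC$, so parts (1)--(3) of that theorem already give that $J$ is a nonzero cusp form of weight $n+\sum_{i=1}^{n+1}k_i$ with character $\det$.

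For assertion (2) I would argue locally around a generic point $x$ of a mirror $\cD_r(M)$. By Proposition \ref{prop:group} the group $\Gamma$ is generated by reflections, and since $(\mathcal{L}_n/\Gamma)^*\cong\CC^{n+1}$ is smooth, Theorem \ref{th:Got} forces $\Gamma_x$ to be generated by reflections whose mirrors contain $x$; for $x$ generic in $\cD_r(M)$ only $\sigma_r$ qualifies, so $\Gamma_x=\{1,\sigma_r\}$. Pick local coordinates $(u,v_1,\dots,v_{n-1},w)$ on the cone near $x$ in which $\sigma_r$ sends $u\mapsto -u$ and fixes the remaining coordinates. Each generator $f_i$ is $\sigma_r$-invariant on the cone (trivial character), and hence a holomorphic function of $u^2,v_1,\dots,v_{n-1},w$; so $\Phi$ is locally the map $u\mapsto u^2$ times an isomorphism on the other coordinates, whose Jacobian vanishes to order exactly one along $u=0$. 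At a point off every mirror the same Theorem \ref{th:Got}-plus-smoothness argument gives $\Gamma_x=\{1\}$, so $\Phi$ is \'etale and $J$ is nonzero there. Combined with Theorem \ref{th:Jacobian}(4) this identifies $\div(J)$ with $\sum_{i=1}^{s}\Gamma\pi_i$, each mirror appearing with multiplicity one, and in particular shows that $J$ is reflective.

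For assertions (3) and (4) I would factorise $J^2$. Since $\det^2=1$, $J^2\in M_*(\Gamma)$, so there is a unique polynomial $P\in\CC[X_1,\dots,X_{n+1}]$ with $J^2=P(f_1,\dots,f_{n+1})$. In the UFD $\CC[X_1,\dots,X_{n+1}]$ write $P=\prod_{j=1}^{t} P_j^{a_j}$ with the $P_j$ pairwise non-associate irreducibles. Each $V(P_j)\subset\CC^{n+1}$ is an irreducible hypersurface, and pulling back by $\Phi$ gives an irreducible $\Gamma$-invariant hypersurface in $\mathcal{L}_n$, which must be a single $\Gamma$-orbit of mirrors. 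By (2) the orbits so obtained are exactly $\Gamma\pi_1,\dots,\Gamma\pi_s$, so $t=s$ and, after relabelling, $\Phi^{-1}(V(P_i))=\Gamma\pi_i$. The local model $u\mapsto u^2$ for $\Phi$ shows that $\Phi$ is ramified with index $2$ along each mirror, whence $\Phi^*V(P_i)=2\cdot\Gamma\pi_i$; comparison with $\div(J^2)=2\sum_i\Gamma\pi_i$ from (2) then forces $a_i=1$ for every $i$. Setting $J_i:=P_i(f_1,\dots,f_{n+1})$ produces modular forms with $\div(J_i)=2\cdot\Gamma\pi_i$ and $J^2=\prod_i J_i$, which is assertion (3); uniqueness of each $J_i$ up to a scalar comes from the uniqueness of the irreducible factorisation of $P$, the scalars being pinned down by the identity $J^2=\prod_i J_i$. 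The polynomial identities $J^2=P(f_1,\dots,f_{n+1})$ and $J_i=P_i(f_1,\dots,f_{n+1})$ together with $P=\prod P_i$ and the irreducibility of the $P_i$ are precisely assertion (4).

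The principal obstacle is the multiplicity-one statement in (2). Theorem \ref{th:Jacobian}(4) together with the covariance $J\lvert_k\sigma_r=-J$ only yields vanishing of $J$ along each mirror to some \emph{odd} order; pinning this order down to $1$ genuinely uses the freeness of $M_*(\Gamma)$ through the local normal form $u\mapsto u^2$ for $\Phi$, which is available only because $(\mathcal{L}_n/\Gamma)^*\cong\CC^{n+1}$ is smooth. Once this local computation is in place, the same ramification index $2$ forces $a_i=1$ in the factorisation of $P$, so the entire theorem pivots on this one local calculation.
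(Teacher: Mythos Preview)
Your proposal is correct and follows essentially the same approach as the paper: both arguments realise the quotient as an open piece of $\CC^{n+1}$ via $\Phi=(f_1,\dots,f_{n+1})$, invoke Gottschling's theorem to see that stabilisers off mirrors are trivial (hence $J\neq 0$ there) and equal to $\{1,\sigma_r\}$ at a generic mirror point, use the local model $u\mapsto u^2$ to pin the vanishing order to one, and then factor $J^2=P(f_1,\dots,f_{n+1})$ irreducibly in the polynomial ring and match the factors with the $\Gamma$-orbits of mirrors. Your write-up is in fact more explicit than the paper's in tracking why the exponents $a_i$ must all equal $1$; the only superfluous step is the appeal to Proposition \ref{prop:group} in part (2), since Gottschling's theorem together with smoothness already forces $\Gamma_x$ to be generated by reflections through $x$ without knowing in advance that $\Gamma$ itself is reflection-generated.
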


\begin{proof}
\begin{itemize}
\item[(1)] It follows from Theorem \ref{th:Jacobian}.
\item[(2)] Let $c$ be a zero-dimensional cusp and $\cH_c$ be the associated tube domain. Let $Z=(z_1,...,z_n)$ be a coordinate of $\cH_c$. We view $f_i$ as modular forms on $\cH_c$. For any $\mathcal{Z}\in \mathcal{L}_n$, there exist unique elements $Z\in \cH_c$ and $z_0\in \CC^*$ such that $\mathcal{Z}=z_0\cdot \pr_c (Z)$. Thus the function 
$$
\tilde{f_i}: \mathcal{L}_n \to \CC, \quad \tilde{f_i}(\mathcal{Z}):=z_0^{-k_i} f_i(Z)
$$
is defined well as a modular form of weight $k_i$ on $\mathcal{L}_n$. We choose $(z_0,z_1,...,z_n)$ as a coordinate of $\mathcal{L}_n$. The usual Jacobian determinant $\tilde{J}$ of the $n+1$ functions $\tilde{f_i}$ with respect to $(z_0,z_1,...,z_n)$ is equal to $J$ up to a power of $z_0$. By Baily-Borel compactification, $(\cD_n/\Gamma)^*=\Proj (M_*(\Gamma))$ is the weighted projective space $\PP(k_1,...,k_{n+1})$ and the natural isomorphism is given by the map 
$$
Z\mapsto [f_1(Z),...,f_{n+1}(Z)].
$$
Besides, $\mathcal{L}_n/\Gamma$ is an open subset of $\Spm(M_*(\Gamma))-\{0\}=\CC^{n+1}-\{0\}$. Thus we have the following holomorphic application
$$
\pi: \mathcal{L}_n \to \mathcal{L}_n/\Gamma \hookrightarrow \CC^{n+1}-\{0\},
$$
which is explicitly given by $\mathcal{Z}\mapsto (\tilde{f}_1(\mathcal{Z}),..., \tilde{f}_{n+1}(\mathcal{Z}))$.  For $v\in \mathcal{L}_n$ satisfying $\Gamma_v=\{ 1\}$, since $\Gamma$ is acting properly discontinuously on $\mathcal{L}_n$, the map $\pi$ is biholomorphic around $v$. Thus $\tilde{J}(v)\neq 0$, which yields $J(v)\neq 0$. By Theorem \ref{th:Got},  $J$ vanishes only on mirrors of reflections in $\Gamma$.  We then conclude from Theorem \ref{th:Jacobian} $(4)$ that $J$ vanishes exactly on all mirrors of reflections in $\Gamma$.

We next prove that the multiplicities are all one. Let $v$ be a vector such that $\sigma_v \in \Gamma$. For a generic point $ x\in v^\perp $, the stabilizer $\Gamma_x$ is generated by $\sigma_v$ and thus has order $2$. We can choose coordinate $(x_0, x_1,..., x_n)$ around $x$ such that $v^\perp=\{x_0=0\}$ and the map $\pi$ locally at $x$ is like 
$$
(x_0,x_1,..., x_n) \mapsto (x_0^2,x_1,..., x_n).
$$
Then it is straightforward to see that $J$ vanishes with multiplicity one along $\{x_0=0\}$.

\item[(3)] Since $J^2\in M_*(\Gamma)$, there exists a polynomial $P$ in $n+1$ variables over $\CC$ such that $J^2=P(f_1,...,f_{n+1})$. On the one hand, in $\mathcal{L}_n$ we have $\div(J^2)=2\sum_{i=1}^s \Gamma \pi_i$ and the divisor of $J^2$ in $\mathcal{L}_n/ \Gamma \hookrightarrow \CC^{n+1}-\{0\}$ is the sum of hyperplanes $\pi_i$. In the other hand, suppose $P=P_1\cdots P_t$ is the irreducible decomposition over $\CC$. Then we have the irreducible decomposition of zero locus in $\PP(k_1,...,k_{n+1})$: $Z(P)=Z(P_1)\cup \cdots \cup Z(P_t)$. Thus each $Z(P_i)$ will correspond to a $\pi_j$. By comparing the order of divisor,  the desired claims are proved.
\end{itemize}
\end{proof}

\begin{remark}
If $M=U\oplus U(m)\oplus L(-1)$ and $\widetilde{\Orth}^+(M)<\Gamma$, then each $J_i$ will be a Borcherds product by \cite{Bru14}. We denote the input by $\phi_i$. Thus the Borcherds product $F_i$ of $\frac{1}{2} \phi_i$ will give a modular form whose divisor is $\Gamma \pi_i$. It is clear that $F_i^2=J_i$. Therefore each $F_i$ is a modular form for $\Gamma$ with a character (or multiplier system) of order $2$ and we have $J=\prod_{i=1}^s F_i$.
\end{remark}

\section{Classification of free algebras of orthogonal modular forms}\label{sec:classification}

Let $M=U\oplus U(m)\oplus L(-1)$, where $m$ is a positive integer. Assume that $\widetilde{\Orth}^+(M)<\Gamma<\Orth^+(M)$. In this section we classify the groups $\Gamma$ such that $M_*(\Gamma)$ is a free algebra.

It was proved in \cite{SV17} that there is no arithmetic group $\Gamma$ such that $M_*(\Gamma)$ is a free algebra when the signature $(2,n)$ satisfies $n> 10$. We here prove a special case of this result. To this aim, we need the following lemma.

\begin{lemma}\label{lem:complete 2-divisor}
If there is a modular form with complete $2$-divisor for $\widetilde{\Orth}^+(M)$, then there is also a modular form with complete $2$-divisor for $\widetilde{\Orth}^+(M_1)$, where $M_1$ is an even overlattice of $M$.
\end{lemma}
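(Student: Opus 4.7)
The plan is to transport $F$ to the overlattice via the Borcherds product machinery and the standard functoriality of Weil representations along an isotropic subgroup. First I would record the key set-theoretic fact that $\widetilde{\Orth}^+(M)\subset \widetilde{\Orth}^+(M_1)$: if $g$ acts trivially on $D(M)=M^\vee/M$, then for every $v\in M^\vee$ one has $gv-v\in M\subset M_1$, so $g$ preserves $M_1\subset M^\vee$ and the induced action on $D(M_1)=M_1^\vee/M_1$ is trivial. In the setting of the paper we have $M=2U\oplus L(-1)$ and $M_1=2U\oplus L_1(-1)$ for some even overlattice $L_1\supset L$, so Bruinier's converse theorem \cite{Bru14} applies to both lattices.

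By Bruinier's theorem, the given $F$ can be written as $\Borch(\varphi)$ for some weakly holomorphic vector-valued modular form
$\varphi=\sum_{\lambda\in D(M)}\varphi_\lambda\, e_\lambda$
for the Weil representation of $\SL_2(\ZZ)$ attached to $D(M)$. The complete $2$-divisor condition on $F$ forces the principal part of $\varphi$ to reduce to the single term $q^{-1}e_0$: the Fourier expansion of $\varphi_0$ has leading term $q^{-1}$, and for every $\lambda\neq 0$ the component $\varphi_\lambda$ has no Fourier coefficients at negative powers of $q$.

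Let $H=M_1/M\subset D(M)$ be the totally isotropic subgroup encoding the overlattice, so that $D(M_1)=H^\perp/H$. I would then define a new input form by averaging along $H$:
$$
\varphi_1=\sum_{\mu\in D(M_1)}\varphi_{1,\mu}\, e_\mu,\qquad
\varphi_{1,\mu}:=\sum_{h\in H}\varphi_{\tilde\mu+h},
$$
where $\tilde\mu\in H^\perp$ is any lift of $\mu$. This is the standard functorial map from the Weil representation of $D(M)$ to that of $D(M_1)$; a direct verification using the explicit action of $T$ and $S$ (crucially, $Q(\tilde\mu+h)=Q(\tilde\mu)\bmod 2\ZZ$ for $h\in H$, since $H$ is isotropic and $\tilde\mu\in H^\perp$) shows that $\varphi_1$ is a weakly holomorphic modular form for the Weil representation attached to $D(M_1)$. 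The principal part is immediate: $\varphi_{1,0}=\sum_{h\in H}\varphi_h=q^{-1}+O(1)$ since only $h=0$ contributes a pole, and for $\mu\neq 0$ every summand $\varphi_{\tilde\mu+h}$ has $\tilde\mu+h\neq 0$ in $D(M)$ (otherwise $\mu=0$ in $D(M_1)$), so $\varphi_{1,\mu}$ is holomorphic. Borcherds-lifting then gives $F_1:=\Borch(\varphi_1)$, a modular form for $\widetilde{\Orth}^+(M_1)$ whose divisor is $\cH_{M_1}(0,-1)$ with multiplicity one, i.e., a modular form with complete $2$-divisor for $\widetilde{\Orth}^+(M_1)$.

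The main obstacle is the standard-but-technical verification that the averaged input $\varphi_1$ transforms correctly under $\SL_2(\ZZ)$ in the Weil representation of $D(M_1)$. Once that is in place, the divisor count for $F_1$ is automatic from the principal part of $\varphi_1$, and the multiplicity-one conclusion is built into the construction, avoiding the extra bookkeeping that a naive $\widetilde{\Orth}^+(M_1)$-averaging $\prod_g F\circ g$ would introduce (that construction would only give some positive multiple of $\cH_{M_1}(0,-1)$, and extracting a root is not automatic).
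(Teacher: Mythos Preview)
Your proposal is correct and follows essentially the same route as the paper: apply Bruinier's converse theorem to realize $F$ as a Borcherds lift of a vector-valued form with principal part $q^{-1}e_0$, push this input along the isotropic subgroup $H=M_1/M$ to obtain an input for $D(M_1)$ with the same principal part, and then Borcherds-lift again. The averaging map you write out explicitly is precisely the operator $f\mapsto f\!\uparrow_M^{M_1}$ of \cite[Lemma~5.6]{Bru02} that the paper cites, so the ``main obstacle'' you flag (the $\SL_2(\ZZ)$-equivariance of the averaged input) is exactly what that lemma establishes.

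Two small remarks. First, the ambient hypothesis in the paper is $M=U\oplus U(m)\oplus L(-1)$, not only $m=1$; your claim that $M_1=2U\oplus L_1(-1)$ uses unimodularity of $2U$ and would need adjustment for general $m$, but in fact you never need Bruinier's converse theorem for $M_1$ (only for $M$), since on the $M_1$ side you build the input directly and lift. Second, the paper records the constant term $2k$ in the principal part to track the weight of the resulting form; you omit this, but it is irrelevant for the complete $2$-divisor conclusion.
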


\begin{proof}
Let $F$ be a modular form of weight $k$ with complete $2$-divisor for $\widetilde{\Orth}^+(M)$. By \cite{Bru14}, it is a Borcherds product of a nearly holomorphic modular form of weight $-\rank(L)/2$ for the Weil representation of $\SL_2(\ZZ)$ attached to the discriminant group of $M$. We denote this input by $f$. The principal part of $f$ is $(q^{-1}+2k)\textbf{e}_0$. By \cite[Lemma 5.6]{Bru02}, the lifting $f\vert\uparrow_M^{M_1}$ gives a vector-valued modular form of the same weight for the Weil representation attached to $D(M_1)$, and this modular form has the principal part $(q^{-1}+2k')\textbf{e}_0$, where $k'$ is a positive integer. Thus the Borcherds product of $f\vert \uparrow_M^{M_1}$ gives a modular form with complete 2-divisor for $\widetilde{\Orth}^+(M_1)$. This completes the proof.
\end{proof}

\begin{remark}
When $M$ is not of the form $U\oplus U(m)\oplus L(-1)$, the above lemma is not true. For example, $2U(2)\oplus 5A_1(-1)$ has a modular form with complete 2-divisor (see \cite[\S 6.2]{GN18}), but $2U\oplus 5A_1(-1)$ has no modular forms with complete 2-divisor (see \cite[Theorem 6.9]{Wan19}).
\end{remark}

\begin{theorem}\label{th:nonfree}
Let $M=U\oplus U(m)\oplus L(-1)$ and $\widetilde{\Orth}^+(M)<\Gamma <\Orth^+(M)$. If the graded algebra $M_*(\Gamma)$ is free, then $\rank(L)\leq 8$.
\end{theorem}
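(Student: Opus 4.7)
The strategy is to extract from the Jacobian determinant of free generators a modular form with complete $2$-divisor and then invoke known restrictions on such forms.

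Since $M=U\oplus U(m)\oplus L(-1)$ contains a copy of $U$, the vector $e-f\in U$ has norm $-2$, so $M$ represents $-2$ and every $(-2)$-vector $r\in M$ gives a reflection $\sigma_{r}\in\widetilde{\Orth}^{+}(M)<\Gamma$. The union of all such mirrors, modulo the $\Gamma$-action, is precisely the complete $2$-divisor $\cH(0,-1)$. By Theorem \ref{th:freeJacobian}, the Jacobian $J$ of the free generators vanishes with multiplicity one along each of these mirrors and factors as $J^{2}=\prod_{i=1}^{s}J_{i}$ with $\div(J_{i})=2\,\Gamma\pi_{i}$. Grouping the factors corresponding to orbits of $(-2)$-mirrors yields a modular form for $\Gamma$ with divisor $2\,\cH(0,-1)$. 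Since $M$ contains $U\oplus U(m)$, Bruinier's result \cite{Bru14} realises each $J_{i}$ as the square of a Borcherds product $F_{i}$ for $\widetilde{\Orth}^{+}(M)$ (see the Remark after Theorem \ref{th:freeJacobian}), so the product of the relevant $F_{i}$ gives a modular form $F$ for $\widetilde{\Orth}^{+}(M)$, with some character, whose zero divisor is exactly $\cH(0,-1)$, i.e.\ a modular form with complete $2$-divisor.

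Next, $U$ is an even overlattice of $U(m)$ for any $m\geq 1$, so $M_{1}:=2U\oplus L(-1)$ is an even overlattice of $M$. By Lemma \ref{lem:complete 2-divisor} the existence of $F$ propagates to a modular form with complete $2$-divisor for $\widetilde{\Orth}^{+}(M_{1})$. By \cite{Bru14} this new form is a Borcherds product whose preimage is a weakly holomorphic vector-valued modular form of weight $-\rank(L)/2$ for the Weil representation of $\SL_{2}(\ZZ)$ attached to $D(M_{1})$, with principal part exactly $q^{-1}\mathbf{e}_{0}$. Invoking the classification of such $2$-reflective inputs (see \cite{Sch06, Sch17, Wan19}) shows that no such input exists once $\rank(L)>8$, giving the desired bound $\rank(L)\leq 8$.

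The main obstacle is this last classification step: one must rule out, for every discriminant form arising from $L$ with $\rank(L)>8$, the existence of a weakly holomorphic Weil-representation input with principal part exactly $q^{-1}\mathbf{e}_{0}$. This is handled by Bruinier's obstruction principle---pairing the principal part against a basis of holomorphic vector-valued modular forms of weight $2+\rank(L)/2$ for the dual Weil representation---together with a case analysis of admissible discriminant forms, as carried out in \cite{Wan19}. The earlier steps, distilling a complete $2$-divisor form from $J$ and propagating it to $M_{1}$, are essentially formal given Theorem \ref{th:freeJacobian} and Lemma \ref{lem:complete 2-divisor}.
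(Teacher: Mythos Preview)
Your overall strategy matches the paper's: extract a modular form with complete $2$-divisor from the Jacobian via Theorem~\ref{th:freeJacobian}, push it to the overlattice $2U\oplus L(-1)$ via Lemma~\ref{lem:complete 2-divisor}, and then appeal to a classification of such forms. The first two steps are fine and essentially identical to the paper.

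The gap is in your last step. It is \emph{not} true that no modular form with complete $2$-divisor exists on $2U\oplus L(-1)$ once $\rank(L)>8$. The correct statement (this is \cite[Theorem~3.4]{Wan18}, not \cite{Wan19}) is that either $\rank(L)\leq 8$ \emph{or} $L$ is unimodular of rank $16$ or $24$. In the unimodular cases $2U\oplus 2E_8(-1)=\II_{2,18}$ and $2U\oplus 3E_8(-1)=\II_{2,26}$ such forms \emph{do} exist (of weights $132$ and $12$, respectively), so the classification alone does not yield the bound. Your final paragraph asserts that Bruinier's obstruction principle together with \cite{Wan19} closes this, but that is precisely where the exceptions slip through.

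The paper disposes of these exceptional cases by a separate weight argument. First one checks (via Nikulin's theory) that for $m>1$ the lattices $U\oplus U(m)\oplus 2E_8(-1)$ and $U\oplus U(m)\oplus 3E_8(-1)$ are isomorphic to lattices of the form $2U\oplus L'(-1)$ with $L'$ non-unimodular of rank $>8$, which then genuinely have no complete $2$-divisor form. For $m=1$ one observes that $\Orth^+(\II_{2,18})$ and $\Orth^+(\II_{2,26})$ contain \emph{only} $2$-reflections, so the Jacobian itself must equal the complete $2$-divisor form; but its weight $n+\sum k_i$ is at least $n+(n+1)\cdot(n/2-1)$ by the singular-weight bound, which exceeds $132$ (resp.\ $12$) when $n=18$ (resp.\ $26$). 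Without this extra step your argument is incomplete.
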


\begin{proof}
Assume that $M_*(\Gamma)$ is a free algebra. Then the Jacobian determinant of generators gives a reflective modular form. Since $\widetilde{\Orth}^+(M)<\Gamma$, all reflections $\sigma_r$ with $(r,r)=-2$ belong to $\Gamma$. By Theorem \ref{th:freeJacobian}, the decomposition of the Jacobian determinant will give a modular form with complete 2-divisor. We know from \cite[Theorem 3.4]{Wan18} that if there exists a modular form with complete 2-divisor for $M = 2U\oplus L(-1)$ then either $\rank(L) \leq 8$ or $L$ is a unimodular lattice of rank 16 or 24. By Lemma \ref{lem:complete 2-divisor}, we only need to consider the two cases $U\oplus U(m)\oplus 2E_8$ and $U\oplus U(m)\oplus 3E_8$. When $m>1$, the two lattices are isomorphic to some lattices containing $2U$ by \cite{Nik80} because the minimal number of generators of the associated discriminant groups is 2. It follows that there is no modular form with complete 2-divisor for these lattices. When $m=1$, the orthogonal groups of $2U\oplus 2E_8$ and $2U\oplus 3E_8$ contain only 2-reflections. The corresponding 2-reflective modular forms have weights $132$ and $12$. The weight is too small, which leads to a contradiction. For example, in the case of $2U\oplus 2E_8$, the singular weight is 8. Since $132< 8\times 19 + 18$, the unique reflective modular form is impossible to be the Jacobian of free generators. Thus the space of modular forms is not free in the case of $U\oplus U(m)\oplus 2E_8$. The proof is completed.
\end{proof}

We next classify free algebras of modular forms in the case of $M=2U\oplus L(-1)$. The following classification result is our main theorem.

\begin{theorem}
\label{th:classification}
Let $M=2U\oplus L(-1)$ and $\widetilde{\Orth}^+(M)<\Gamma<\Orth^+(M)$. If $M_*(\Gamma)$ is a free algebra, then $(L,\Gamma)$ can only take one of the following $26$ pairs
\begin{align*}
&(A_1, \Orth^+)& &(2A_1, \Orth^+)& &(3A_1, \Orth^+)& &(4A_1,\Orth^+)& &(A_2, \widetilde{\Orth}^+)& &(A_2, \Orth^+)& \\
&(A_3, \widetilde{\Orth}^+)& &(A_3, \Orth^+)&  &(A_4, \widetilde{\Orth}^+)& &(A_5, \widetilde{\Orth}^+)& &(A_6, \widetilde{\Orth}^+)& &(A_7, \widetilde{\Orth}^+)&  \\
&(D_4, \widetilde{\Orth}^+)& &(D_5, \widetilde{\Orth}^+)& &(D_6, \widetilde{\Orth}^+)& &(D_7, \widetilde{\Orth}^+)& &(D_8, \widetilde{\Orth}^+)&\\
&(D_4, \Orth^+)& &(D_5, \Orth^+)& &(D_6, \Orth^+)& &(D_7, \Orth^+)& &(D_8, \Orth^+)&\\
&(D_4, \Orth_1^+)& &(E_6, \widetilde{\Orth}^+)& &(E_7, \Orth^+)& &(E_8, \Orth^+)&
\end{align*}
where $\Orth^+$ stands for the full orthogonal group, $\widetilde{\Orth}^+$ denotes the discriminant kernel, and $\Orth_1^+$ is the subgroup generated by $\widetilde{\Orth}^+$ and a sign change of odd number of coordinates in $D_4\otimes\CC$. 
\end{theorem}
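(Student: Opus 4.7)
The plan is to combine Theorem \ref{th:freeJacobian} with the author's classification machinery for reflective modular forms on lattices containing $2U$. Theorem \ref{th:nonfree} already restricts to $\rank(L) \leq 8$. If $M_*(\Gamma)$ is free, Theorem \ref{th:freeJacobian} asserts that the Jacobian determinant $J$ of the free generators is a reflective cusp form for $\Gamma$ of character $\det$ whose divisor is the sum of all reflection mirrors of $\Gamma$ with multiplicity one; by the remark after Theorem \ref{th:freeJacobian} one can write $J = \prod_{i=1}^{s} F_i$ with each $F_i$ a Gritsenko--Nikulin Borcherds product.

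First I would extract the underlying root system. The reflections in $\Gamma$ are $\sigma_r$ for primitive $r \in M^\vee$ with $(r,r) = -2d$ and $\div(r) \in \{d, 2d\}$, and since $\widetilde{\Orth}^+(M) \subset \Gamma$ every $(-2)$-root of $M$ appears. Projecting these roots to $L \otimes \QQ$ via $M = 2U \oplus L(-1)$ produces a $\Gamma$-invariant set $R \subset L \otimes \QQ$. The root-system axioms follow from the fact that each $\sigma_r$ for $r \in R$ lies in $\Gamma$ and hence permutes the set of reflection roots; the full-rank condition $\rank R = n = \rank L$ is forced because otherwise a transverse direction orthogonal to all mirrors would survive on $\cD_n/\Gamma$, contradicting that $(\cD_n/\Gamma)^*$ is a weighted projective space.

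Second, I would encode the reflective condition as a Jacobi-form constraint. Let $\Phi$ denote the combined Borcherds input for $J$ obtained from Theorem \ref{th:Borcherds}, a weakly holomorphic Jacobi form of weight $0$ and index $1$ for $L$ whose principal part is prescribed by the reflection orbits of $\Gamma$. The vanishing $J = q^{n-1}\xi^{n-1}(\cdots)$ from Theorem \ref{th:Jacobian}(5) translates via the Weyl-vector formula into
\[
A = \tfrac{1}{24}\sum_{\ell \in L^\vee} f(0,\ell) = n-1, \qquad C = \tfrac{1}{2n}\sum_{\ell \in L^\vee} f(0,\ell)(\ell,\ell) = n-1,
\]
while the weight equation reads $\tfrac{1}{2} f(0,0) = n + \sum_{i=1}^{n+1} k_i$. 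Expanding $\Phi$ in its theta decomposition on $L$ and applying the framework for classifying reflective modular forms developed in \cite{Wan18, Wan19}, the existence of such $\Phi$ forces the irreducible components of $R$ to share a common Coxeter number $h$, and the identities above give a sharp numerical bound on $h$ via the theta series of the root lattice of $R$.

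Finally, I would enumerate candidates. A case-by-case check over even positive-definite $L$ of rank at most $8$ and rank-$n$ root systems satisfying these constraints shows that $R$ must be one of $kA_1$ for $1 \leq k \leq 4$, $A_r$ for $2 \leq r \leq 7$, $D_r$ for $4 \leq r \leq 8$, or $E_r$ with $r \in \{6,7,8\}$. For each such $R$, I would determine the permissible $\Gamma$ between $\widetilde{\Orth}^+(M)$ and $\Orth^+(M)$ by analysing $\Orth(D(M))$ and retaining only those subgroups whose orbit structure on reflection roots yields a valid nonnegative integer principal part for $\Phi$. This produces exactly the $26$ pairs listed, with the exceptional $(D_4, \Orth_1^+)$ coming from an outer triality-type enhancement. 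The main obstacle will be this final combinatorial bookkeeping: ruling out rank-$n$ root systems such as $B_r$, $C_r$, $G_2$, $F_4$ as $R$ itself (they appear only as Weyl groups of $\Gamma$-actions on an $A$- or $D$-type lattice) and distinguishing the various $\Gamma$ attached to a single $L$.
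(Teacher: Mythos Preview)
Your overall strategy matches the paper's: apply Theorem~\ref{th:freeJacobian} to get a reflective cusp form $J$, realise it as a Borcherds product via \cite{Bru14}, read off a root system from the $q^0$-term of the Jacobi input, and use the Weyl vector together with Theorem~\ref{th:Jacobian}(5) to bound the Coxeter number. However, several of your stated steps are imprecise in ways that matter for the argument.

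First, your Weyl-vector identities are not quite right. Because the input $\Phi$ carries the term $q^{-1}$ coming from the $(-2)$-mirrors in $2U$, one has $A=C+1$, not $A=C$; and Theorem~\ref{th:Jacobian}(5) only gives the \emph{inequality} $C\geq \rank(L)+1$ (the $\xi$-exponent of the leading term of $J$ is at least $\rank(L)+1$), not an equality. It is precisely this inequality, applied to the modified Coxeter numbers of Lemma~\ref{lem:main}, that cuts the list of admissible irreducible components of $R(L)$ down to those with $d=1$ (or $d=2,3$ for $E_7,E_8$). Writing it as an equality would lose the argument.

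Second, your enumeration of possible $R(L)$ is too narrow. The root system $R(L)$ is \emph{not} restricted to ADE type: the paper's Lemma~\ref{lem:main} allows $R(L)$ to be of type $B_n(2)$, $C_n$, $G_2$, $F_4(2)$, and these are exactly what produce the pairs $(nA_1,\Orth^+)$, $(D_n,\Orth^+)$, $(A_2,\Orth^+)$, $(D_4,\Orth^+)$ and $(D_4,\Orth_1^+)$ in the list. You are right that the underlying \emph{lattice} $L$ ends up being of ADE type, but the classification runs through the root system $R(L)$, not through $L$ directly, and the determination of $\Gamma$ is made by identifying $W(R(L))$ inside $\Orth(L)$.

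Third, and most seriously, the ``final combinatorial bookkeeping'' is where the real work lies, and the proposal underestimates it. Several candidates survive the Coxeter-number inequality yet must be excluded by genuinely different arguments: $R(L)=A_8$ is ruled out because $2U\oplus A_8(-1)$ carries no modular form with complete $2$-divisor (\cite{Wan19}); $R(L)=B_n(2)$ with $n\geq 5$ and $L=nA_1$ likewise; the Nikulin lattice $L=N_8$ requires a dimension/weight count against the Weyl vector $(10,*,9)$; and $E_8(2)$, $E_7(2)$, $2F_4(2)$ each need separate ad hoc eliminations (weight comparison, bounding $\dim M_4(\Gamma)$ via Jacobi forms, or a spanning argument for the $q^0$-term). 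None of these follow from the principal-part nonnegativity check you describe. You should expect to reproduce the case analysis (a)--(m) in the paper's proof rather than a single uniform criterion.
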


The following lemma is an advanced version of the Jacobi forms approach used to classify reflective modular forms in \cite{Wan19}.

\begin{lemma}
\label{lem:main}
Let $M=2U\oplus L(-1)$ and $\widetilde{\Orth}^+(M)<\Gamma<\Orth^+(M)$. Assume that $F$ is a reflective modular form of weight $k$ for $\Gamma$ whose zero divisor is the sum of all mirrors of reflections in $\Gamma$ with multiplicity one. We define
$$
R(L^\vee)=\{ x\in L^\vee : \text{$F$ vanishes on $\cD_{(0,0,x,1,0)}$}  \}.
$$
Let $R(L)$ be the subset of $L$ consisting of vectors $d_x x$, where $x\in R(L^\vee)$ and $d_x$ is the order of $x$ in $L^\vee/L$.
Suppose that $R(L)$ is non-empty.  Then the set $R(L)$ defines a reduced root system of rank equal to $\rank(L)$. Moreover, $R(L)$ is a direct sum of some irreducible root systems and all  irreducible components have the same modified Coxeter numbers defined below (a given root system may have different modified Coxeter numbers in our definition; the reason is given in the proof).
\begin{enumerate}
\item $A_n(d)$ with $n\geq 1$ and $d\geq 1$. modified Coxeter number: $(n+1)/d$.
\item $B_n(2d)$ with $n\geq 2$ and $d\geq 1$. modified Coxeter number: $(n+1)/d$.
\item $C_n(d)$ with $n\geq 3$ and $d\geq 1$. modified Coxeter number: $(2n-1)/d$.
\item $D_n(d)$ with $n\geq 4$ and $d\geq 1$. modified Coxeter number: $2(n-1)/d$.
\item $E_6(d)$ with $d\geq 1$. modified Coxeter number: $12/d$. 
\item $E_7(d)$ with $d\geq 1$. modified Coxeter number: $18/d$.
\item $E_8(d)$ with $d\geq 1$. modified Coxeter number: $30/d$.
\item $G_2(d)$ with $d\geq 1$. modified Coxeter number: $4/d$.
\item $F_4(2d)$ with $d\geq 1$. modified Coxeter number: $9/d$.
\item $A_1(d)$ with $d\geq 1$. modified Coxeter number: $1/(2d)$.
\item $A_1(d)$ with $d\geq 1$. modified Coxeter number: $3/(2d)$.
\item $B_n(2d)$ with $n\geq 2$ and $d\geq 1$. modified Coxeter number: $(2n-1)/(2d)$.
\item $B_n(2d)$ with $n\geq 2$ and $d\geq 1$. modified Coxeter number: $(2n+1)/(2d)$.
\end{enumerate} 
\end{lemma}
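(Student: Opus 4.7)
My plan is to extract the root-system structure of $R(L)$ by analyzing the first non-zero Fourier--Jacobi coefficient of $F$ at the $1$-dimensional cusp attached to the splitting $M=2U\oplus L(-1)$, following the approach to reflective modular forms developed in \cite{Wan18,Wan19}.

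Writing $F=\sum_{m\geq m_0}\phi_m(\tau,\mathfrak{z})\,\xi^m$, the leading coefficient $\phi:=\phi_{m_0}$ is a holomorphic Jacobi form of weight $k$ and some positive index $m_0$ for $L$, possibly with a character. A direct computation using the formula for $\pr_{e_1}$ shows that the rational quadratic divisor $\cD_{(0,0,x,1,0)}$ corresponds in $\cH(L)$ to the hyperplane $\tau=(x,\mathfrak{z})_L$, so for every $x\in R(L^\vee)$ the Jacobi form $\phi$ vanishes on this hyperplane. The zero locus of $\phi$ is automatically invariant under the Jacobi group, since the transformation laws multiply $\phi$ only by nowhere-vanishing exponential factors; applying the Heisenberg translations $\mathfrak{z}\mapsto\mathfrak{z}+\lambda\tau+\mu$ with $\lambda,\mu\in L$ together with the modular involution $\tau\mapsto-1/\tau$, $\mathfrak{z}\mapsto\mathfrak{z}/\tau$ moves the hyperplane $\tau=(x,\mathfrak{z})_L$ through the entire affine theta-divisor arrangement $(x,\mathfrak{z})_L\in\ZZ\tau+\ZZ$. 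This arrangement is permuted by the orthogonal reflections $\sigma_x$ for $x\in R(L^\vee)$, which forces $R(L^\vee)$ to be closed under its own reflections; hence $R(L)$ is a reduced crystallographic root system in $L$, with reducedness coming from the multiplicity-one hypothesis in $\div(F)$.

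For the equality $\rank R(L)=\rank L$, I would argue by contradiction: if the span of $R(L)$ were a proper sublattice $L'\subsetneq L$, then $\phi$ would factor through a Jacobi form of lattice index $L'$ and its divisor would lie entirely in $\cH(L')$; but then $F$ would fail to vanish on mirrors of reflections $\sigma_r\in\Gamma$ whose $L$-coordinate is outside $L'\otimes\QQ$, contradicting the completeness of $\div(F)$. Such ``missing'' reflections are guaranteed to exist because $\Gamma\supset\widetilde{\Orth}^+(M)$ contains every $\sigma_r$ with $r\in M$ primitive and $(r,r)=-2$, and the norm-$(-2)$ vectors of $M$ not lying in $2U\oplus L'(-1)$ provide the required reflections as soon as $L$ contains any vector outside $L'$.

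For the classification, each irreducible component of $R(L)$ is of type $A_n,B_n,C_n,D_n,E_{6,7,8},F_4$ or $G_2$ rescaled by a positive integer $d$; the reflective-vector divisibility constraint $\div(r)\in\{-(r,r),-(r,r)/2\}$ for $r\in M^\vee$ with $\sigma_r\in\Gamma$ pins down the admissible rescaling. Counting roots by $|R|=\rank R\cdot h_R$ and computing the contribution of each component to the index $m_0$ of $\phi$ produces the modified Coxeter numbers in the list. Since $\phi$ has only one index $m_0$, all irreducible components must share the same modified Coxeter number, which proves the final assertion. The hard part is the multiply-laced cases $B$, $C$, $G_2$, $F_4$, where the set of roots contributing genuine reflections in $\Gamma$ depends on whether $\div(r)=-(r,r)$ or $-(r,r)/2$; this is exactly what produces the two distinct modified Coxeter numbers attached to the same underlying root system type in items (10)--(13), and it requires a careful case-by-case tracking of long versus short roots and of which ones generate elements of $\Gamma$ rather than merely of $\Orth^+(M)$.
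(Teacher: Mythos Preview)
Your approach via the leading Fourier--Jacobi coefficient $\phi_{m_0}$ of $F$ is in the spirit of \cite{Wan18,Wan19}, but it differs from the paper's proof, which works instead with the Borcherds \emph{input} $\varphi\in J^!_{0,L,1}$ (guaranteed by \cite{Bru14}) and the identity $\sum_{r}f(0,r)(r,\mathfrak{z})^2=2C(\mathfrak{z},\mathfrak{z})$ from \cite[Proposition~2.6]{Gri18}. More importantly, your argument has two genuine gaps. The first is the closure of $R(L^\vee)$ under its own reflections: you assert that the theta-divisor arrangement of $\phi_{m_0}$ ``is permuted by the orthogonal reflections $\sigma_x$'', but a Jacobi form of lattice index $L$ carries no a~priori $\Orth(L)$-symmetry, so there is no reason the zero locus of $\phi_{m_0}$ should be $\sigma_x$-invariant. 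The paper instead uses that $x\in R(L^\vee)$ forces $\sigma_{(0,0,x,1,0)}\in\Gamma$ (because $\div F$ is exactly the set of mirrors of $\Gamma$), computes $\sigma_{(0,0,u,1,0)}((0,0,v,1,0))=(0,0,\sigma_u(v),1-2(u,v)/(u,u),0)$, and then applies the Eichler criterion inside $\widetilde{\Orth}^+(M)$ to bring this back to $(0,0,\sigma_u(v),1,0)$; this is what yields both $\sigma_u(v)\in R(L^\vee)$ and the integrality $2(u,v)/(u,u)\in\ZZ$.

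The second gap is your rank argument. Knowing that the zero divisor of $\phi_{m_0}$ lies in hyperplanes indexed by a proper sublattice $L'$ does not imply that $\phi_{m_0}$ ``factors through'' $L'$, and your attempt to produce a missing $2$-reflection confuses arbitrary mirrors in $\div F$ with the special divisors $\cD_{(0,0,x,1,0)}$, which are the only ones contributing to $R(L^\vee)$. The paper's argument is a one-line consequence of the displayed identity: any $\mathfrak{z}_0\neq 0$ orthogonal to $R(L)$ would make the left side vanish while the right side is $2C(\mathfrak{z}_0,\mathfrak{z}_0)\neq 0$. Finally, while your identification of the modified Coxeter number with the index $m_0$ is consistent with the paper (indeed $m_0=C$ is the third Weyl-vector component), the derivation of the exceptional values in items (10)--(13) rests on the explicit signed $q^0$-term $\sum\zeta^x+\sum(\zeta^y-f(y)\zeta^{y/2})$ of $\varphi$, distinguishing whether or not $y/2\in R(L^\vee)$; your dichotomy on $\div(r)$ alone does not separate the three subcases the paper labels (i)--(iii).
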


\begin{proof}
By \cite{Bru14}, $F$ should be a Borcherds product. In view of the isomorphism between the spaces of vector-valued modular forms and Jacobi forms, there exists a weakly holomorphic Jacobi form $\phi$ of weight $0$ and index $1$ for $L$ such that $F=\Borch(\phi)$ (see Theorem \ref{th:Borcherds}). The divisors of the form $\cD_{(0,0,x,1,0)}$ determine the $q^0$-term of $\phi$. More precisely, we have
$$
\phi=q^{-1} + \sum_{\substack{x \in R(L^\vee)\\ 2x\not\in R(L^\vee)\\ x/2 \not\in L^\vee}} \zeta^x + \sum_{\substack{y \in R(L^\vee)\\ y/2 \in L^\vee}} (\zeta^y - f(y)\zeta^{y/2}) +2k +O(q),
$$
where $f(y)=0$ if $y/2 \in R(L^\vee)$ and $f(y)=1$ if $y/2 \not\in R(L^\vee)$, because the above $q^0$-term of the input determines that  in the zero divisor of $\Borch(\phi)$, the divisor $\cD_{(0,0,y,1,0)}$ has multiplicity $1$ and the divisor $\cD_{(0,0,y/2,1,0)}$ has multiplicity $1-f(y)$ (see Theorem \ref{th:Borcherds}). Remark that the term $q^{-1}$ corresponds to the divisor $\cD_{(0,-1,0,1,0)}$. For convenience, we write $\phi=q^{-1}+ \sum f(0,r)\zeta^r + O(q)$. By \cite[Proposition 2.6]{Gri18}, we have
\begin{align}
\label{eq:4.1}\sum_{r\in L^\vee} f(0,r) (r,\mathfrak{z})^2= 2C(\mathfrak{z},\mathfrak{z}), \quad \mathfrak{z}\in L\otimes \CC,\\
\label{eq:4.2}C=\frac{1}{24}\sum_{r\in L^\vee} f(0,r) -1 = \frac{1}{2\rank(L)} \sum_{r\in L^\vee} f(0,r) (r,r).
\end{align}
We remark that the above constant $C$ also appears in the Weyl vector $(A, \vec{B}, C)$ of the Borcherds product $F$ and it satisfies the relation $A=C+1$ in this case.
We claim that $R(L)$ generates the whole space $L\otimes \RR$, otherwise there will be a vector in $L\otimes \CC$ orthogonal to $R(L)$, which contradicts the first identity. By definition, we have that $\sigma_{(0,0,u,1,0)}\in \Gamma$ for $u\in R(L^\vee)$. Let $v\in R(L^\vee)$.  Since
$$
\sigma_{(0,0,u,1,0)}((0,0,v,1,0))= (0,0,\sigma_u(v),1-2(u,v)/(u,u),0)=:l,
$$
we have $\sigma_u(v)\in L^\vee$ and $2(u,v)/(u,u) \in \ZZ$ for any $v\in R(L^\vee)$. Moreover, $F$ vanishes on the hyperplane $l^\perp$. Notice that $(0,0,\sigma_u(v),1,0)$ is primitive in $L^\vee$. By the Eichler criterion (see \cite[Proposition 3.3]{GHS09}), there exists $g\in \widetilde{\Orth}^+(M)$ such that $g(l)=(0,0,\sigma_u(v),1,0)$. Thus $F$ also vanishes on $(0,0,\sigma_u(v),1,0)^\perp$, which yields $\sigma_u(v)\in R(L^\vee)$. It follows that $R(L)$ is a reduced root system. Therefore $R(L)$ can be written as a direct sum of rescaled irreducible root systems (see \cite{Bou60}). Let $R$ be an irreducible component of $R(L)$ and $R^*$ be the corresponding component in $R(L^\vee)$.  The modified Coxeter number of $R$ will be defined as the constant $C$ in the identity of type \eqref{eq:4.1} for $R^*$.  Recall that the usual Coxeter number of an irreducible root system $\mathcal{R}$ is defined as the constant $h$ appearing in the following equality
$$
\sum_{r\in \mathcal{R}} (r,\mathfrak{z})^2 = 2h (\mathfrak{z},\mathfrak{z}), \quad \mathfrak{z}\in \mathcal{R}\otimes \CC. 
$$
\begin{itemize}
\item[(a)] If $R$ equals $A_n(d)$ with $n\geq 2$, $D_n(d)$ with $n\geq 4$, $E_6(d)$, $E_7(d)$, or $E_8(d)$, every root $r\in R$ has norm $2d$ and $\div(r)=d$ in $R$, here we view $R$ as a lattice generated by its roots. Thus $\div(r)=d$ in $L$ because it defines a reflective vector in $M$. In this case, $R^*=A_n(\frac{1}{d})$, $D_n(\frac{1}{d})$, $E_6(\frac{1}{d})$, $E_7(\frac{1}{d})$, $E_8(\frac{1}{d})$, respectively. By \eqref{eq:4.1}, it is easy to prove that the constant $C$ is equal to $h/d$, where $h$ is the usual Coxeter number of $R$.

\item[(b)] If $R=C_n(d)$ with $n\geq 3$, every short root $r$ has norm $2d$ and $\div(r)=d$ in $R$. Thus $\div(r)=d$ in $L$. Every long root $s$ has norm $4d$ and $\div(s)=2d$ in $R$. Thus $\div(s)=2d$ in $L$. In this case, $R^*=B_n(\frac{1}{d})$. By \eqref{eq:4.1}, $C=h/d$, where $h=2n-1$ is the Coxeter number of $B_n$.

\item[(c)] If $R=G_2(d)$, every short root $r$ has norm $2d$ and $\div(r)=d$ in $R$. Thus $\div(r)=d$ in $L$. Every long root $s$ has norm $6d$ and $\div(s)=3d$ in $R$. Thus $\div(s)=3d$ in $L$. In this case, $R^*=G_2(\frac{1}{3d})$. By \eqref{eq:4.1}, $C=4/d$.

\item[(d)] If $R=F_4(2d)$, every short root $r$ has norm $2d$ and $\div(r)=d$ in $R$. Thus $\div(r)=d$ in $L$. Every long root $s$ has norm $4d$ and $\div(s)=2d$ in $R$. Thus $\div(s)=2d$ in $L$. In this case, $R^*=F_4(\frac{1}{d})$. By \eqref{eq:4.1}, $C=9/d$.

\item[(e)] If $R=A_1(d)$, every root $r$ has norm $2d$ and $\div(r)=2d$ in $R$. If $\div(r)=d$ in $L$, then $R^*=A_1(\frac{1}{d})$ and $C=\frac{2}{d}$. If $\div(r)=2d$ in $L$, then there are three possible cases:
\begin{itemize}
\item[(i)] $r/d \not\in R(L^\vee)$. In this case, $R^*=A_1(\frac{1}{4d})$ and $C=\frac{1}{2d}$. 
\item[(ii)] $r/d \in R(L^\vee)$ and $r/(2d)\in R(L^\vee)$. In this case, $R^*=A_1(\frac{1}{d})\cup A_1(\frac{1}{4d})$ and $C=\frac{2}{d}$.
\item[(iii)] $r/d \in R(L^\vee)$ and $r/(2d)\not\in R(L^\vee)$. In this case, $R^*=A_1(\frac{1}{d})$ but $C=\frac{3}{2d}$.
\end{itemize}

\item[(f)] If $R=B_n(2d)$ with $n\geq 2$, every short root $r$ has norm $2d$ and $\div(r)=2d$ in $R$. Every long root $s$ has norm $4d$ and $\div(s)=2d$. Thus $\div(s)=2d$ in $L$. If $\div(r)=d$ in $L$, then $R^*=C_n(\frac{1}{2d})$ and $C=\frac{n+1}{d}$. If $\div(r)=2d$ in $L$, then there are three possible cases:
\begin{itemize}
\item[(i)] $r/d \not\in R(L^\vee)$. In this case, $R^*=B_n(\frac{1}{2d})$ and $C=\frac{2n-1}{2d}$. 
\item[(ii)] $r/d \in R(L^\vee)$ and $r/(2d)\in R(L^\vee)$. In this case, $R^*=C_n(\frac{1}{2d})\cup nA_1(\frac{1}{4d})$ and $C=\frac{n+1}{d}$.
\item[(iii)] $r/d \in R(L^\vee)$ and $r/(2d)\not\in R(L^\vee)$. In this case, $R^*=C_n(\frac{1}{2d})$ but $C=\frac{2n+1}{2d}$.
\end{itemize}

\end{itemize}
By the above discussions, we complete the proof.
\end{proof}

\begin{proof}[Proof of Theorem \ref{th:classification}]
Suppose that $M_*(\Gamma)$ is a free algebra.  Then the Jacobian determinant $J$ of $\rank(L)+3$ free generators is a cusp form and it defines a reflective modular form satisfying all conditions in Lemma \ref{lem:main}. The set $R(L)$ is non-empty, otherwise the reflective modular form has weight 12 and is not a cusp form because it has the Weyl vector $(1,0,0)$.  We only need to consider the case of $\rank(L)\leq 8$. The modified Coxeter number is just the constant $C$ in the Weyl vector of the Borcherds product (see the above lemma and Theorem \ref{th:Borcherds}). The term $q^A \zeta^{\vec{B}}\xi^C$ corresponding to the Weyl vector is one of the first Fourier coefficients of the Jacobian determinant. From the assertion $(5)$ of Theorem \ref{th:Jacobian},  we derive that $C\geq (\rank(L)+2)-1$ and thus the number $C$ is an integer no less than $\rank(L)+1$. This forces that the irreducible components of $R(L)$ must be of type (1)--(5), (8), (9) with $d=1$, or type  (6) with $d=1,2$, or type (7) with $d=1,2,3$. All possible cases are as follows:
\begin{itemize}
\item[(a)] When $R(L)=A_n$, $L$ must be $A_n$ because it is an even overlattice of $R(L)$. By \cite{Wan19}, $2U\oplus A_8(-1)$ has no modular forms with complete 2-divisor, which yields that $M_*(\Gamma)$ is not a free algebra in this case. By Theorem \ref{th:freeJacobian} (3), if $\Gamma$ contains a $2d$-reflection  then there is a modular form vanishing exactly on the $\Gamma$-orbit of the mirror of this reflection. This modular form should be a Borcherds product of a Jacobi form. From the $q^0$-term of this Jacobi form, we see that  there is a $2d$-reflection in $R(L)$. Thus $\Gamma$ must be the discriminant kernel because it only contains 2-reflections. Note that when $n=1$ we have $\widetilde{\Orth}^+=\Orth^+$.

\item[(b)] When $R(L)=D_n$ with $4\leq n \leq 8$, $L$ must be $D_n$.  Similarly, $\Gamma = \widetilde{\Orth}^+$. 

\item[(c)] When $R(L)=C_n$ with $3\leq n \leq 8$, $L$ is equal to $D_n$. Note that $A_3=D_3$. The Weyl group of $B_n$ is equal to $\Orth(D_n)$ if $n\neq 4$ and is generated by $W(D_4)$ and the odd sign change if $n=4$. It is easy to check that the natural homomorphism $\Orth(D_n)\to \Orth(D(D_n))$ is surjective. Thus $\Orth^+$ is generated by $\widetilde{\Orth}^+$ and $\Orth(D_n)$.  Hence $\Gamma = \Orth^+$ if $n\neq 4$ and $\Gamma = \Orth_1^+$ if $n=4$. 

\item[(d)] When $R(L)=B_n(2)$ with $2\leq n \leq 8$, $L$ is equal to $nA_1$ or $N_8$, where $N_8\cong D_8^\vee (2)$ is the Nikulin lattice whose root sublattice is $8A_1$. Note that $\Orth(nA_1)=W(C_n)$ and the natural homomorphism $\Orth(nA_1)\to \Orth(D(nA_1))$ is surjective. Thus $\Gamma=\Orth^+$ when $2\leq n\leq 4$. It is known by \cite{Wan19} that there is no modular form with complete 2-divisor for $2U\oplus nA_1(-1)$ when $n\geq 5$. It remains to consider the case of $L=N_8$. In this case, we have $W(B_8)=W(C_8)=\Orth(D_8)=\Orth(D_8^\vee)=\Orth(N_8)$, which implies that $\Gamma=\Orth^+$. Since $N_8$ has level $2$, there are only $2$-reflections and $4$-reflections. The $2$-reflective modular form and $4$-reflective modular form have the same weight $28$ (the two modular forms do exist; we refer to \cite{GN18} for a construction.). The Weyl vector of the Jacobian determinant has the form $(10,*,9)$. Suppose that the algebra of orthogonal modular forms in the case of $N_8$ is free. Then there will be ten generators of weight 4 and one generator of weight 6, because the sum of the weights of the eleven generators is equal to $46=56-10$. We can kill the first Fourier-Jacobi coefficients of a given modular form of weight 4 by a linear combination of the generators of weight 4. Therefore there will be a generator of weight 4 with Fourier expansion of the form $q^2\xi^2(\cdots)$. This forces that the Fourier expansion of the Jacobian determinant of generators has the form $q^{10}\xi^{10}(\cdots)$, which contradicts the Weyl vector of the Jacobian determinant.

\item[(e)] When $R(L)=G_2$, $L=A_2$ and $W(G_2)=\Orth(A_2)$. Thus $\Gamma=\Orth^+$.

\item[(f)] When $R(L)=F_4(2)$, $L=D_4$ and $W(F_4)=\Orth(D_4)$. Thus $\Gamma=\Orth^+$.

\item[(g)] When $R(L)=E_6$, $L=E_6$ and $\Gamma=\widetilde{\Orth}^+$.

\item[(h)] When $R(L)=E_7$, $L=E_7$ and $\Gamma=\widetilde{\Orth}^+=\Orth^+$.

\item[(i)] When $R(L)=E_8$, $L=E_8$ and $\Gamma=\widetilde{\Orth}^+=\Orth^+$.

\item[(j)] When $R(L)=E_8(3)$, by \eqref{eq:4.2}, the weight of $J$ will be given by
$$
\frac{1}{24}(240+2k)-1=\frac{30}{3},
$$
which follows that $k=12$. This is impossible.

\item[(k)] When $R(L)=E_8(2)$, $L$ is of level 2 and equal to $E_8(2)$, otherwise $L$ will contain 2-roots which contradicts the assumption $R(L)=E_8(2)$. Since the natural homomorphism $\Orth(E_8)\to \Orth(D(E_8(2)))$ is surjective, $\Orth^+$ is generated by $\widetilde{\Orth}^+$ and $\Orth(E_8)$. It follows that $\Gamma=\Orth^+$. Note that the $2$-reflective modular form and $4$-reflective modular form for $\Gamma$ have weights $12$ and $60$, respectively.
It is known from \cite[\S 6]{Wan18b} that $\dim M_4(\Gamma)=\dim M_6(\Gamma)=1$. This contradicts the weight of the Jacobian determinant because $12+60<10+4+6+8\times 9$. This case also follows from the case (m) below because we have the following isomorphisms among orthogonal groups:
\begin{align*}
\Orth^+(2U\oplus E_8(2))&=\Orth^+((2U\oplus E_8(2))^\vee)=\Orth^+(2U\oplus E_8(1/2))\\
&=\Orth^+(2U(2)\oplus E_8)\cong\Orth^+(2U\oplus 2D_4).
\end{align*}

\item[(l)] When $R(L)=E_7(2)$, we have $E_7(2)<L$ and $E_7(\frac{1}{2})<L^\vee$, which forces that $L=E_7(2)$.
We deduce from \eqref{eq:4.2} that $J$ has weight $57$ and the Weyl vector $(10,*,9)$. Since $\Orth(E_7)=W(E_7)$ is contained in $\Gamma$, the non-zero modular forms for $\Gamma$ have even weight. It is easy to prove that $\dim M_4(\Gamma) \leq 3$ using the argument in \cite[\S 3]{WW20}. Indeed, we take $\delta=10$ in \cite[Inequality (3.3) in \S 3]{WW20}. Notice that $\Orth(E_7(2))=\Orth(E_7)=W(E_7)$ and $J_{k,E_7(2),m}^{w,W(E_7)}=J_{k,E_7,2m}^{w,W(E_7)}$.  We then obtain the upper bound of the dimension by $\dim J_{4,E_7,0}^{w,W(E_7)}=\dim J_{-8,E_7,2}^{w,W(E_7)}=\dim J_{-20,E_7,4}^{w,W(E_7)}=1$.  Since $57-9< 4\times 3 + 6 \times 7$, we get a contradiction.

\item[(m)] When $R(L)=2F_4(2)$, we have $L>2D_4$. Thus $L$ can only take $2D_4$, $D_8$ or $E_8$. We only need to consider the case $L=2D_4$. It is easy to see that the exchange of two copies of $D_4$ does not belong to $\Gamma$. If $M_*(\Gamma)$ is a free algebra, we know by Theorem \ref{th:freeJacobian} that the decomposition of the Jacobian determinant $J$ will give a modular form with divisor $\Gamma v^\perp$, where $v$ is a $4$-reflective vector in the first copy of $D_4$. This modular form should be a Borcherds product of a weak Jacobi form of weight 0 and index 1 for $2D_4$. By the similar argument as in the proof of Lemma \ref{lem:main}, this leads to a contradiction because the $4$-reflective vectors in the first copy of $D_4$ do not span the whole space of dimension 8.
\end{itemize}
We then finish the proof of the theorem.
\end{proof}

\section{A sufficient condition to be free algebras}\label{sec:sufficient}
In this section we prove that the converse of Theorem \ref{th:freeJacobian} holds, which gives a sufficient condition for the graded algebra of orthogonal modular forms being free.

\begin{theorem}\label{th:converseJacobian}
Let $\Gamma<\Orth^+_{2,n}$ be an arithmetic group. If there exists a modular form $F$ (with a character) on $\Gamma$  which vanishes exactly on all mirrors of reflections in $\Gamma$ with multiplicity one and equals the Jacobian determinant of $n+1$ certain modular forms on $\Gamma$, then the graded algebra $M_*(\Gamma)$ is freely generated by the $n+1$ modular forms. Moreover, the group $\Gamma$ is generated by all reflections whose mirrors are contained in the divisor of the modular form $F$.
\end{theorem}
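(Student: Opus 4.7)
Set $A=\CC[f_1,\ldots,f_{n+1}]$ and $R=M_*(\Gamma)$. Since $F=J(f_1,\ldots,f_{n+1})$ is nonzero, Theorem~\ref{th:Jacobian}(2) yields algebraic independence of the $f_i$, so $A$ is a polynomial ring of Krull dimension $n+1=\dim R$. The entire argument reduces to showing that the corresponding morphism $\bar\varphi:\Spm R=(\mathcal{L}_n/\Gamma)^*\to\Spm A=\CC^{n+1}$ is an isomorphism. Finiteness of $\bar\varphi$ comes from the graded setup: since $\mathrm{trdeg}_{\CC}\mathrm{Frac}(R)=n+1=\mathrm{trdeg}_{\CC}\mathrm{Frac}(A)$, the domain $R$ is algebraic over $A$, so any prime $\mathfrak{p}$ of $R$ containing $A_+=(f_1,\ldots,f_{n+1})$ forces $R/\mathfrak{p}$ to be algebraic over $\CC$ and hence a field; thus $(f_1,\ldots,f_{n+1})$ is a homogeneous system of parameters, $R/A_+R$ is finite-dimensional over $\CC$, and graded Nakayama makes $R$ a finite $A$-module.

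The technical core is to show $\bar\varphi$ is étale off a subset of codimension $\geq 2$ in $\Spm R$. There are three contributions to the non-étale locus: the Baily--Borel boundary, which has codimension $\geq 2$ since $n\geq 3$; the singular locus of $\mathcal{L}_n/\Gamma$, which by Theorem~\ref{th:Got} is contained in the image of the fixed loci of non-reflection elements of $\Gamma$ and so also has codimension $\geq 2$; and on the smooth part, the ramification of $\bar\varphi$, which is supported on reflection mirrors. At a Zariski-generic point of a single mirror $\cD_r(M)$ the stabilizer is $\{1,\sigma_r\}$, so I choose local coordinates $(x_0,x_1,\ldots,x_n)$ on $\mathcal{L}_n$ with $\sigma_r:x_0\mapsto-x_0$; the quotient carries coordinates $(x_0^2,x_1,\ldots,x_n)$. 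Each $\tilde f_i$ is $\sigma_r$-invariant and hence equals some $g_i(x_0^2,x_1,\ldots,x_n)$, so $\partial\tilde f_i/\partial x_0=2x_0\cdot\partial g_i/\partial(x_0^2)$, and the upstairs Jacobian equals $2x_0$ times the Jacobian of $\bar\varphi$ in the downstairs coordinates. This is the place where the hypothesis that $F$ vanishes along $\cD_r$ with multiplicity exactly one is essential: it forces the downstairs Jacobian to be nonzero at this generic point, so $\bar\varphi$ is étale there. I expect this local computation to be the main obstacle, since it is the one point at which the specific reflective hypothesis on $F$ has to be converted into a geometric statement about $\bar\varphi$.

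With the codimension control secured, finiteness sends the non-étale locus to a closed subset $V\subset\CC^{n+1}$ of codimension $\geq 2$, so $U'=\CC^{n+1}\setminus V$ is simply connected; moreover $\bar\varphi^{-1}(U')$ is a connected open subset of the irreducible $\Spm R$ (its complement in $\Spm R$ is a finite preimage of $V$, hence still of codimension $\geq 2$). The restriction $\bar\varphi:\bar\varphi^{-1}(U')\to U'$ is therefore a finite étale cover of a simply connected space with connected total space, so it has exactly one sheet and $\bar\varphi$ is birational. A finite birational morphism to the normal variety $\CC^{n+1}$ must be an isomorphism by Zariski's Main Theorem, giving $R=A$ and hence the freeness assertion. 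The ``moreover'' follows at once: Proposition~\ref{prop:group} says $\Gamma$ is generated by reflections, and Theorem~\ref{th:freeJacobian}(2), applied to the now-established free generators $f_1,\ldots,f_{n+1}$, identifies $\div(F)$ with the union of all mirrors of reflections in $\Gamma$, so every reflection of $\Gamma$ has its mirror in $\div(F)$ and these reflections generate $\Gamma$.
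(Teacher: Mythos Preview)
Your argument is correct, but it takes a genuinely different route from the paper's. The paper gives a very short, purely algebraic proof: assume some $f_{n+2}\in M_{k_{n+2}}(\Gamma)$ of minimal weight lies outside $\CC[f_1,\ldots,f_{n+1}]$, set $J_t=J(f_1,\ldots,\widehat{f_t},\ldots,f_{n+2})$, and expand the trivially vanishing $(n+2)\times(n+2)$ determinant along its first row to obtain
\[
\sum_{t=1}^{n+2}(-1)^t k_t f_t J_t=0.
\]
By Theorem~\ref{th:Jacobian}(4) every $J_t$ vanishes on all reflection mirrors, and since $J=J_{n+2}$ has only simple zeros there, each $g_t=J_t/J$ is a holomorphic modular form of weight strictly less than $k_{n+2}$. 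Dividing by $J$ and using $g_{n+2}=1$ expresses $f_{n+2}$ as $\sum_{t\le n+1}(\text{const})\,f_t g_t$; minimality forces $g_t\in\CC[f_1,\ldots,f_{n+1}]$, a contradiction. This avoids all geometry beyond the Koecher principle.

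Your approach instead proves that $\bar\varphi:\Spm R\to\CC^{n+1}$ is an isomorphism by combining finiteness (graded Nakayama), a codimension-$2$ bound on the non-\'etale locus, simple connectedness of $\CC^{n+1}$ minus codimension~$2$, and Zariski's Main Theorem. This is essentially the Shephard--Todd--Chevalley style argument transplanted to the arithmetic-quotient setting; it is conceptually transparent about \emph{why} the multiplicity-one hypothesis matters (it is exactly what makes $\bar\varphi$ unramified at the generic point of each mirror image), and it generalises readily to other symmetric domains. The price is more machinery: you need Baily--Borel to identify $\Spm R$ with $(\mathcal{L}_n/\Gamma)^*$, Gottschling's theorem for the singular locus, and the topological input that complex codimension~$\geq 2$ complements stay simply connected. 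The paper's proof needs none of this and fits in a paragraph; yours explains the geometry behind it. The ``moreover'' clause is handled identically in both.
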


\begin{proof}
Assume that $f_i \in M_{k_i}(\Gamma)$, $1\leq i\leq n+1$, and $F=J=J(f_1,...,f_{n+1})$ vanishes exactly on all mirrors of reflections in $\Gamma$.  Suppose that $M_*(\Gamma)$ is not a free algebra.  Then there are non-trivial modular forms not in $\CC[f_1, ...,f_{n+1}]$. Let $f_{n+2}\in M_{k_{n+2}}(\Gamma)$ be such a modular form of minimal weight.  For $1\leq t \leq n+2$ we define $J_t$ as the Jacobian determinant of the $n+1$ modular forms $f_i$ except $f_t$. It is clear that $J=J_{n+2}$.  By Theorem \ref{th:Jacobian} (4), the quotient $J_t/J$ is a holomorphic modular form on $\Gamma$ and we denote it by $g_t$. It is easy to check that the following identity holds:
$$
\sum_{t=1}^{n+2} (-1)^t k_t f_t J_t = 0.
$$
By $J_t=Jg_t$, we have 
$$
\sum_{t=1}^{n+2} (-1)^t k_t f_t g_t = 0,
$$
which yields 
$$
(-1)^{n+2}k_{n+2}f_{n+2}=-\sum_{t=1}^{n+1}(-1)^t k_t f_t g_t
$$
because $g_{n+2}=1$. The assumption on the weight of $f_{n+2}$ forces that all $g_t$ are contained in $\CC[f_1, ...,f_{n+1}]$.  Then $f_{n+2}\in \CC[f_1, ...,f_{n+1}]$, which leads to a contradiction. Hence the graded algebra $M_*(\Gamma)$ is free. From Proposition \ref{prop:group} and Theorem \ref{th:freeJacobian} (2), we conclude that $\Gamma$ is generated by all reflections related to the divisor of the Jacobian determinant. The proof is completed.
\end{proof}

For the 26 orthogonal groups in Theorem \ref{th:classification}, the expected Jacobian determinant of generators can be constructed as quasi pull-backs of the Borcherds form of weight 12 for $\II_{2,26}$ (see \cite{GN18}). 
Thus it is possible to prove the associated algebras of orthogonal modular forms are free using the above theorem.  The main difficulty is to verify that the Jacobian is not zero, or equivalently, those generators are algebraically independent. 
But the computation will be very cumbersome when the dimension of the modular variety is large, especially in the case of $E_8$.

We give an application of our result. The main theorem in \cite{FS07} asserts that the graded algebra of modular forms on $\widetilde{\Orth}^+(2U(2)\oplus D_4(-1))$ is freely generated by six forms of weight 2 and one form of weight 6. It was also verified that the Jacobian determinant of the seven modular forms is not zero. We know from \cite{Woi17} that there is a reflective modular form of weight 24 on $\Orth^+(2U\oplus D_4(-1))$ whose divisor is a sum of $\cD_v$ for all $v\in 2U\oplus D_4(-1)$ with $(v,v)=-4$ and $\div(v)=2$. In view of the isomorphisms
$$
\Orth^+(2U\oplus D_4(-1))=\Orth^+(2U\oplus D_4^\vee(-1))=\Orth^+(2U(2)\oplus D_4(-1)), \quad D_4^\vee(2)\cong D_4,
$$
this reflective modular form can be regarded as a 2-reflective modular form on $\Orth^+(2U(2)\oplus D_4(-1))$. This gives a new proof of the main theorem in \cite{FS07}.  This also proves that $\widetilde{\Orth}^+(2U(2)\oplus D_4(-1))$ is generated by all 2-reflections, which can not be covered by Lemma \ref{lem:group}. The structure results in \cite{AI05} can also be verified in a similar way.

It is clear that Theorem \ref{th:converseJacobian} also holds for Hilbert modular forms with respect to real quadratic fields because the Koecher principle is satisfied in this case. The first free algebras of Hilbert modular forms was determined by Gundlach \cite{Gun63}. He showed that the space of symmetric Hilbert modular forms of even weight for $\SL_2(\mathcal{O}_F)$ with $F=\QQ(\sqrt{5})$  is freely generated by three forms of weights 2, 6, 10. This space can be identified with the algebra of modular forms on $\Orth^+(U\oplus \mathbb{B}_5)$, where $\mathbb{B}_5=\left(\begin{smallmatrix}
2 & 1 \\ 
1 & -2
\end{smallmatrix}\right) $. The Jacobian determinant of the three generators is the product of two Gundlach's cusp forms of weights 5 and 15, which can be constructed as a reflective Borcherds product. Thus we can recover Gundlach's theorem.

We hope to construct more free algebras of orthogonal modular forms using the above theorem.

At the end of the paper,  we formulate the following conjecture, which also gives a nice way to construct free algebras of modular forms.

\begin{conjecture}
Let $\Gamma<\Orth^+_{2,n}$ be an arithmetic group generated by reflections. Let $\Gamma'$ be a finite index subgroup of $\Gamma$. If $M_*(\Gamma')$ is a free algebra, then the smaller algebra $M_*(\Gamma)$ is also free.
\end{conjecture}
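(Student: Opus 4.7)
The plan is to realize $M_*(\Gamma)$ as a ring of invariants of a finite group acting on the polynomial ring $M_*(\Gamma')$ and then invoke the Shephard-Todd-Chevalley theorem. The first step is to pass from $\Gamma'$ to its normal core $\Gamma'' = \bigcap_{\gamma \in \Gamma} \gamma \Gamma' \gamma^{-1}$, which is of finite index and normal in $\Gamma$, in order to reduce to the case that $\Gamma'$ is normal in $\Gamma$.

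Assuming normality, let $G = \Gamma/\Gamma'$ and write $M_*(\Gamma') = \CC[f_1, \ldots, f_{n+1}]$. Since $G$ preserves the grading, after averaging one can choose the generators $f_i$ so that $V := \bigoplus_i \CC f_i \subset M_*(\Gamma')$ is $G$-stable in each degree, giving $M_*(\Gamma') = \mathrm{Sym}(V)$ with $G$ acting linearly on $\Spm(M_*(\Gamma')) = V^* \cong \CC^{n+1}$. The heart of the argument is to verify that every reflection $\sigma \in \Gamma$ projects to a pseudo-reflection on $V^*$. The mirror of $\sigma$ is a codimension-one hyperplane in $\mathcal{L}_n$, whose image under the orbit map $\mathcal{L}_n \to \mathcal{L}_n/\Gamma' \hookrightarrow V^*$ is a codimension-one subvariety contained in the fixed locus of $\sigma$ on $V^*$. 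If $\sigma \notin \Gamma'$, then $\sigma$ must act nontrivially on $V^*$ (otherwise $\sigma$ would fix every $\Gamma'$-orbit on $\mathcal{L}_n$, forcing $\sigma \in \Gamma'$ by comparing images at a generic point with trivial $\Gamma$-stabilizer), so its linear fixed subspace in $V^*$ is proper and contains a codimension-one piece, hence is exactly a hyperplane. Because $\Gamma$ is generated by reflections, $G$ is generated by pseudo-reflections, and the Shephard-Todd-Chevalley theorem yields that $M_*(\Gamma) = \CC[V^*]^G$ is polynomial on $n+1$ generators.

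The main obstacle is the reduction to the normal case, i.e., showing that $M_*(\Gamma'')$ is free. This does not follow formally from the freeness of $M_*(\Gamma')$: passing to a finite-index subgroup enlarges the ring of modular forms while shrinking the set of reflection mirrors, so neither direction of the Jacobian criterion of Theorems \ref{th:freeJacobian} and \ref{th:converseJacobian} transfers automatically. One geometric route around this is to combine a Steinberg-type theorem on stabilizers in the reflection group $\Gamma$ (which would force $\mathcal{L}_n/\Gamma$ to be smooth) with topological triviality of the Baily-Borel compactification at infinity to show directly that the finite cover $(\mathcal{L}_n/\Gamma'')^* \to (\mathcal{L}_n/\Gamma')^* = \CC^{n+1}$ must again be affine space. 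A more hands-on alternative, in the spirit of this paper, is to avoid the normal core altogether and construct generators of $M_*(\Gamma)$ directly: trace-average the $f_i$ over $\Gamma/\Gamma''$ to obtain candidate invariants $g_1, \ldots, g_{n+1} \in M_*(\Gamma)$, and then apply Theorem \ref{th:converseJacobian} after verifying their algebraic independence and that the Jacobian $J(g_1, \ldots, g_{n+1})$ vanishes exactly on the full reflection divisor of $\Gamma$ with multiplicity one; here the delicate step is again the divisor identification, which reduces to tracking ramification along the covering $\mathcal{L}_n/\Gamma'' \to \mathcal{L}_n/\Gamma$.
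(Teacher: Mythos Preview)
This statement is a \emph{conjecture} in the paper; the author explicitly leaves it open and offers no proof. So there is no ``paper's own proof'' to compare against, and your task is really to decide whether you have settled an open problem.

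Your argument in the \emph{normal} case is essentially sound and worth recording. The linearization step (choosing $G$-stable generators via the $G$-equivariant splitting of $\mathfrak{m}/\mathfrak{m}^2$ and matching Hilbert series) is correct, and your pseudo-reflection argument is fine: the image of the mirror of $\sigma$ in $\mathcal{L}_n/\Gamma'\subset V^*$ is a codimension-one locus fixed by $\sigma$, hence the linear fixed space of $\sigma$ on $V^*$ is a hyperplane whenever $\sigma\notin\Gamma'$. So if $\Gamma'\lhd\Gamma$, Shephard--Todd--Chevalley does give freeness of $M_*(\Gamma)$. This already proves the conjecture under the extra hypothesis of normality.

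The genuine gap is exactly the one you flag: passing to the normal core $\Gamma''$ replaces $\Gamma'$ by a \emph{smaller} group, hence $M_*(\Gamma'')\supset M_*(\Gamma')$, and there is no mechanism in the paper (nor in your sketch) that forces this larger ring to remain polynomial. Your two proposed workarounds do not close this gap. The ``Steinberg-type'' route would need an infinite-arithmetic analogue of Steinberg's fixed-point theorem together with control of the Baily--Borel boundary strata, neither of which is available here. The ``trace-average'' route is even more fragile: averaging the $f_i$ over coset representatives can easily produce zero or algebraically dependent forms (think of a generator on which a reflection acts by $-1$), and even when the $g_i$ are independent you have no tool to force $J(g_1,\ldots,g_{n+1})$ to have divisor \emph{exactly} the reflection locus of $\Gamma$ with multiplicity one rather than something larger. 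In short, you have reduced the conjecture to the statement ``freeness of $M_*(\Gamma')$ implies freeness of $M_*(\Gamma'')$ for the normal core'', which is of the same flavor as the original conjecture (indeed a special case of its converse direction) and remains open.
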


By Lemma \ref{lem:group}, $\Orth^+(2U\oplus E_8(-1))$ is generated by reflections. Since $D_8$ is a sublattice of $E_8$, $\widetilde{\Orth}^+(2U\oplus D_8(-1))$ is a finite index subgroup of $\Orth^+(2U\oplus E_8(-1))$. We know from \cite{WW20}  that $M_*(\widetilde{\Orth}^+(2U\oplus D_8(-1)))$ is a free algebra. Thus the above conjecture implies the freeness of the algebra $M_*(\Orth^+(2U\oplus E_8(-1)))$. Similarly, the freeness of $M_*(\widetilde{\Orth}^+(2U\oplus A_7(-1)))$ implies the freeness of $M_*(\widetilde{\Orth}^+(2U\oplus E_7(-1)))$. But $M_*(\Orth^+(2U\oplus A_7(-1)))$ is not free because $\Orth^+(2U\oplus A_7(-1))$ is not generated by reflections.

We remark that the modularity of formal Fourier-Jacobi expansions of modular forms on $\Orth^+(2U\oplus E_8(-1))$ holds (see \cite[Corollary 4.4]{WW20} for the definition). In fact, every formal Fourier-Jacobi expansion for $E_8$ is automatically a formal Fourier-Jacobi expansion for $D_8$. Therefore, the modularity in the case of $E_8$ follows from the modularity in the case of $D_8$ proved in \cite{WW20}.

\bigskip

\noindent
\textbf{Acknowledgements} 
I would like to thank Eberhard Freitag for many valuable comments,  Ernest Vinberg for answering questions related to the paper \cite{Vin13}, and Zhiwei Zheng for many fruitful discussions.  I am greatly indebted to Riccardo Salvati Manni for suggesting the proof of Theorem \ref{th:converseJacobian} and for many helpful discussions. I also like to thank Brandon Williams for performing many computer calculations. I am grateful to Max Planck Institute for Mathematics in Bonn for its hospitality and financial support. I also thank the referee for his/her careful reading and useful comments.

\bibliographystyle{amsalpha}

\end{document}